\definecolor{bl}{rgb}{0.0,0.2,0.6}
\def\printtitle{
    {\color{bl} \centering \huge \sc \textbf{\@title}\par}}		
\title{Some Remarks on Groups with Action on Itself
\vspace*{10pt}}
\def\printauthor{
    {\color{bl} \centering \large \sc \textbf{\@author}\par}}				
\author{	Ahmet Faruk Aslan , Alper Odaba\c{s} and  Enver $\ddot{{\rm O}}$nder
Uslu 	\vspace*{10pt}}
\newtheorem{theorem}{Theorem}
\newtheorem{definition}[theorem]{Definition}
\newtheorem{example}[theorem]{Example}
\newtheorem{corollary}[theorem]{Corollary}
\newtheorem{remark}[theorem]{Remark}
\newtheorem{proposition}[theorem]{Proposition}
\newtheorem{condition}{Condition}
\newenvironment{proof}[1][Proof]{\noindent\textbf{#1.} }{\ \rule{0.5em}{0.5em}}
\begin{document}
\printtitle

\printauthor

\bigskip

\textbf{Address:} Department of Mathematics and Computer Sciences, Osmangazi University, Art and
Science Faculty, Eskisehir, Turkey.

\bigskip

\textbf{e-mail addresses:} \texttt{afaslan@ogu.edu.tr, aodabas@ogu.edu.tr, enveruslu@ogu.edu.tr}

\bigskip

\textbf{Abstract:} We implement GAP functions about groups with action on itself and investigate some basic properties of small groups with action on itself of order $<32$.

\bigskip

\textbf{Keywords:} GAP, center, central series, ideal, nilpotency.%

\bigskip


\section{Introduction}

The notion of group with action on itself ( or shortly group with action )
was introduced by T. Datuashvili in \cite{tamar1, tamar2} for solving the
problem stated by J.-L. Loday in \cite{loday, loday1}, to define algebraic
objects called \textquotedblleft coquecigrues\textquotedblright\ which would
have an analogous role for Leibniz algebras as groups have for Lie algebras.
In \cite{tamar1}, the notion of central series of groups with action was
defined and given analogue of Witt's construction \cite{ikion} for such
objects. For this a condition was found and constructed a subcategory of the
category of groups with action whose objects satisfy this condition which
was called Condition 1.

It was interesting to investigate what kind of properties do small groups
with action on itself have. In standard GAP \cite{gap} library, there is no
function about the groups with action on itself. For this first we added the
function \texttt{AllGwAOnGroup(G)}, to obtain all group with action from a
small group $G$. Then we added some functions to get ideals, center and
isomorphism family of a given group with action $GA$. Finally, we added the
functions \texttt{NilpotencyClassOfGwA(GA)}, \texttt{IsSingular(GA)},
\texttt{IsGwAC1(GA)} which check if the group with action $GA$ is nilpotent,
singular or satisfies Condition 1 or not. Additionally, to prove our main
result Theorem \ref{main}, we constructed an enumeration table of group with
action obtained from small groups with order $<32$.

\bigskip

\textbf{Acknowledgement : }We would like to thank T. Datuashvili for
valuable comments and suggestions while her visit to Eskisehir Osmangazi
University supported by TÜB\.{I}TAK grand 2221 konuk veya akademik izinli
bilim insan\i\ destekleme program\i .

\section{\textbf{Preliminaries}}

In this section, we will recall some definitions and properties of self
acting groups from \cite{tamar1}, \cite{tamar2}. Let $G$ be a group which
acts on itself from the right side; i.e we have a map $\varepsilon :G\times
G\rightarrow G$ with%
\begin{equation*}
\begin{tabular}{l}
$\varepsilon (g,g^{\prime }+g^{\prime \prime })=\varepsilon (\varepsilon
(g,g^{\prime }),g^{\prime \prime }),$ \\
$\varepsilon (g,0)=g,$ \\
$\varepsilon (g^{\prime }+g^{\prime \prime },g)=\varepsilon (g^{\prime
},g)+\varepsilon (g^{\prime \prime },g),$ \\
$\varepsilon (0,g)=0,$%
\end{tabular}%
\end{equation*}%
for $g,g^{\prime },g^{\prime \prime }\in G$. Denote $\varepsilon (g,h)=g^{h}$%
, for $g,h\in G$. The pair $G^{\bullet }=(G,\varepsilon )$ is called a group
with action. We denote the group operation additively, nevertheless the
group is not abelian in general.

If $(G^{\prime },\varepsilon ^{\prime })$ is another group with action. A
homomorphism $(G,\varepsilon )\longrightarrow (G^{\prime },\varepsilon
^{\prime })$ is a group homomorphism $\varphi :G\longrightarrow G^{\prime }$
for which the diagram%
\begin{equation*}
\xymatrix@R=50pt@C=50pt{ G \times G  \ar[r]^{\varepsilon} \ar[d]_{(\varphi,\varphi)} & G \ar[d]^{\varphi} \\
G' \times G' \ar[r]^{\varepsilon'} & G'  }
\end{equation*}%
commutes. In other words, we have%
\begin{equation*}
\varphi (g^{h})=\varphi (g)^{\varphi (h)},\text{ \ }g,h\in G.
\end{equation*}

If we consider an action as a group homomorphism $G\overset{v}{%
\longrightarrow }AutG$, then a homomorphism between two groups with action
means the commutativity of the diagram%
\begin{equation*}
\xymatrix@R=25pt@C=65pt{ G \ar[r]^-{v} \ar[dd]_{\varphi} & AutG \subset Hom(G,G) \ar[d]^{Hom(G,\varphi)} \\
 & Hom(G,G') \\
 G' \ar[r]^-{v'} & AutG' \subset Hom(G',G') \ar[u]_{Hom(\varphi,G')}  }
\end{equation*}%
so that $\varphi \cdot (v(h))=v^{\prime }(\varphi (h))\cdot \varphi $, $h\in
G$.

We shall denote the category of groups with action by $\mathfrak{Gr}%
^{\bullet }$. Let $\mathfrak{Ab}^{\bullet }$ denote the category of abelian
groups with action; here we mean the action within $\mathfrak{Gr}^{\bullet }$%
. We have the functors%
\begin{equation*}
\xymatrix@R=40pt@C=40pt{ \mathfrak{Ab}^{\bullet} \ar@<0.5ex>[r]^{E} & \mathfrak{Gr}^{\bullet} \ar@<3.2ex>[r]^{Q_{1}} \ar@<0.5ex>[l]^{A} \ar@<-1.2ex>[r]^{Q_{2}}  & \mathfrak{Gr} \ar@<3.2ex>[l]_{C} \ar@<-1.2ex>[l]_{T} }
\end{equation*}%
where $Q_{1}(G),$ for $G\in \mathfrak{Gr}^{\bullet }$, is the greatest
quotient group of $G$ which makes the action trivial; $Q_{2}(G)$ is a
quotient of $G$ by the equivalence relation generated by the relation $%
g^{h}\sim -h+g+h$, $g,h\in G;$ $A$ is the abelianization functor, thus $%
A(G)=G/(G,G),$ where $(G,G)$ is the ideal of $G$ generated by the commutator
normal subgroup of $G$. $A(G)$ has the induced operation of action on
itself. Each group can be considered as a group with the trivial action or
with the action by conjugation, these give functors $T$ and $C$,
respectively. Every object of $\mathfrak{Ab}^{\bullet }$ can be considered
as an object of $\mathfrak{Gr}^{\bullet }$; this functor is denoted by $E$.
It is easy to see that the functors $Q_{1},Q_{2}$ and $A$ are left adjoints
to the functors $T,C$ and $E$ respectively. Also we have the forgetful
functor $\mathfrak{U}:\mathfrak{Gr}^{\bullet }\longrightarrow \mathfrak{Gr}$
which takes a group with action to its underlying group. In the sequel, the
groups with actions will be denoted by the letters $G^{\bullet
}=(G,\varepsilon _{G}),H^{\bullet }=(H,\varepsilon _{H}),...$

Now we will present some definitions and propositions from \cite{tamar1}
without proofs.

\begin{definition}
\label{ideal}Let $G^{\bullet }\in \mathfrak{Gr}^{\bullet }$. Let $A$ be
nonempty subset of $G$. If the conditions

\begin{enumerate}
\item[\textbf{i.}] $A$ is a normal subgroup of $G$ as a group,

\item[\textbf{ii.}] $a^{g}\in A$, for $a\in A$,$g\in G,$

\item[\textbf{iii.}] $-g+g^{a}\in A$, for $a\in A$ and $g\in G$,
\end{enumerate}

\noindent satisfied, then $A^{\bullet }$ is called an ideal of $G^{\bullet }$%
.
\end{definition}

Thus an ideal of $G^{\bullet }$ is a subobject of $G^{\bullet }$ in $%
\mathfrak{Gr}^{\bullet }$. It is clear that $G^{\bullet }$ itself and the
trivial subobject of $G^{\bullet }$ are ideals of $G^{\bullet }$. An
intersection of any system of ideals of $G^{\bullet }$ is an ideal, and
therefore we conclude that there exists the ideal generated by a system of
elements of $G^{\bullet }$.

\begin{proposition}
Let $G^{\bullet }\in \mathfrak{Gr}^{\bullet }$ and $A^{\bullet }$ be an
ideal of $G^{\bullet }$. For $a_{1},a_{2}\in A$, $g_{1},g_{2}\in G$ we have
\begin{equation*}
(a_{1}+g_{1})^{a_{2}+g_{2}}\in g_{1}^{g_{2}}+A\text{.}
\end{equation*}
\end{proposition}

Let $A^{\bullet }$ and $B^{\bullet }$ be subobjects of $G^{\bullet }$.
Denote by $\{A,B\}$ the subobject of $G^{\bullet }$ generated by $A^{\bullet
}$ and $B^{\bullet }$, and let $A+B$ denote the subset of $G$%
\begin{equation*}
A+B=\left\{ a+b:a\in A,b\in B\right\} .
\end{equation*}

\begin{proposition}
If $A^{\bullet }$ is an ideal of $G^{\bullet }$ and $B^{\bullet }$ is a
subobject of $G^{\bullet }$, then%
\begin{equation*}
\{A,B\}=A+B
\end{equation*}
\end{proposition}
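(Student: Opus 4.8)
The plan is to show $\{A,B\}=A+B$ by proving the two inclusions, with the main content being that $A+B$ is already a subobject of $G^{\bullet}$. Since $\{A,B\}$ is the smallest subobject containing both $A$ and $B$, and $A+B$ visibly contains $A=A+0$ and $B=0+B$ (using $0\in A$ and $0\in B$), the inclusion $\{A,B\}\subseteq A+B$ follows immediately once we know $A+B$ is a subobject. Conversely, every element $a+b$ with $a\in A$, $b\in B$ must lie in any subobject containing $A$ and $B$, so $A+B\subseteq\{A,B\}$ is automatic. Thus the whole proposition reduces to verifying that $A+B$ is closed under the group operation, under inverses, and under the action $\varepsilon$.

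First I would check that $A+B$ is a subgroup. For closure, take $a_1+b_1$ and $a_2+b_2$; I want to rewrite $(a_1+b_1)+(a_2+b_2)$ in the form $a'+b'$. The trick is to move $b_1$ past $a_2$ using normality of $A$ in $G$ (condition \textbf{i} of Definition \ref{ideal}): since $A$ is a normal subgroup, $b_1+a_2 = a_2'+b_1$ for some $a_2'\in A$ (conjugating $a_2$ by $b_1$ stays in $A$), so the product collapses to $(a_1+a_2')+(b_1+b_2)\in A+B$. Inverses are handled the same way: $-(a+b)=-b-a = a''-b$ with $a''=-(-b+a+b)\in A$ again by normality, and $-b\in B$. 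So $A+B$ is a subgroup, and in fact $A$ is normal in it.

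Next I would verify the two action conditions that make $A+B$ a subobject rather than merely a subgroup. I need $(a+b)^g\in A+B$ for all $g$ — but this is cleaner if I use the companion Proposition (the one giving $(a_1+g_1)^{a_2+g_2}\in g_1^{g_2}+A$): taking the base element in $A+B$ and the exponent in $G$, and more to the point taking the exponent in $A+B$, I can control where the action lands. Concretely, to show $A+B$ is closed under the restricted action $\varepsilon\colon (A+B)\times(A+B)\to G$, I apply that Proposition with $g_1\in B$, $g_2\in B$ and read off that $(a_1+g_1)^{a_2+g_2}\in g_1^{g_2}+A$; since $B$ is itself a subobject, $g_1^{g_2}\in B$, and hence the whole expression lies in $B+A=A+B$. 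The equality $B+A=A+B$ as sets is once more a consequence of the normality of $A$.

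The step I expect to be the main obstacle is bookkeeping the normality moves so that every element genuinely returns to the canonical form $A+B$ (as opposed to $B+A$), because the group is nonabelian and written additively, which makes sign and order errors easy. The cleanest route is to establish $A+B=B+A$ as sets at the outset — a direct consequence of $A\trianglelefteq G$ — and then invoke it freely; that single lemma absorbs most of the noncommutativity and lets the closure, inverse, and action verifications go through mechanically, with the action part delegated almost entirely to the preceding Proposition.
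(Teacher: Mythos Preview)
The paper does not actually prove this proposition: it explicitly presents Propositions~2--5 ``from \cite{tamar1} without proofs,'' so there is no in-paper argument to compare against. Your approach is nonetheless correct and is essentially the intended one: reduce the equality to showing that $A+B$ is a subobject, use normality of $A$ (condition~\textbf{i} of Definition~\ref{ideal}) to get $A+B=B+A$ and hence the subgroup property, and then invoke the preceding Proposition $(a_1+g_1)^{a_2+g_2}\in g_1^{g_2}+A$ with $g_1,g_2\in B$ together with the fact that $B^{\bullet}$ is a subobject (so $b_1^{b_2}\in B$) to obtain action-closure. One small wobble: you briefly write ``$(a+b)^g\in A+B$ for all $g$,'' which is stronger than needed---a subobject only requires closure under the action by its own elements---but you immediately correct course and restrict the exponent to $A+B$, which is exactly what the cited Proposition handles.
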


\begin{proposition}
If $A^{\bullet }$ and $B^{\bullet }$ are ideals of $G^{\bullet }$, then $A+B$
is also an ideal.
\end{proposition}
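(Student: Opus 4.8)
The plan is to verify the three defining conditions of Definition \ref{ideal} directly for the subset $A+B$, writing a typical element as $a+b$ with $a\in A$, $b\in B$, and letting $g\in G$ be arbitrary.

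First I would dispose of condition \textbf{i}. Since $A^{\bullet }$ and $B^{\bullet }$ are ideals, $A$ and $B$ are normal subgroups of $G$, so the product $A+B$ is again a normal subgroup; this can be read off from $-g+(a+b)+g=(-g+a+g)+(-g+b+g)\in A+B$. Alternatively, by the preceding proposition $\{A,B\}=A+B$, so $A+B$ is a subobject of $G^{\bullet }$, in particular a subgroup, and normality again follows from that of $A$ and $B$.

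Condition \textbf{ii} is the easiest step. For $g\in G$ I would apply the right-additivity axiom $\varepsilon (g^{\prime }+g^{\prime \prime },g)=\varepsilon (g^{\prime },g)+\varepsilon (g^{\prime \prime },g)$, which gives $(a+b)^{g}=a^{g}+b^{g}$. Because $A^{\bullet }$ and $B^{\bullet }$ are ideals, $a^{g}\in A$ and $b^{g}\in B$, whence $(a+b)^{g}\in A+B$.

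Condition \textbf{iii} is the step I expect to be the main obstacle, and it is where the composition axiom $\varepsilon (g,g^{\prime }+g^{\prime \prime })=\varepsilon (\varepsilon (g,g^{\prime }),g^{\prime \prime })$ enters. I would first rewrite $g^{a+b}=(g^{a})^{b}$ and then insert the null term $-g^{a}+g^{a}$ to telescope:
\begin{equation*}
-g+g^{a+b}=-g+(g^{a})^{b}=(-g+g^{a})+\bigl(-g^{a}+(g^{a})^{b}\bigr).
\end{equation*}
Condition \textbf{iii} for the ideal $A^{\bullet }$ gives $-g+g^{a}\in A$, while condition \textbf{iii} for the ideal $B^{\bullet }$, applied to the element $g^{a}\in G$ in place of $g$, gives $-g^{a}+(g^{a})^{b}\in B$. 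Hence $-g+g^{a+b}\in A+B$, completing the verification. The only subtlety is choosing the right intermediate term to split the expression into an $A$-part and a $B$-part; once that insertion is made, each piece lands in the correct ideal by the respective condition \textbf{iii}.
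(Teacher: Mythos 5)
Your proof is correct and complete: normality of $A+B$ is the classical fact about products of normal subgroups, condition \textbf{ii} follows from the right-distributivity axiom $(a+b)^{g}=a^{g}+b^{g}$, and for condition \textbf{iii} the composition axiom $g^{a+b}=(g^{a})^{b}$ together with the telescoping insertion of $-g^{a}+g^{a}$ splits $-g+g^{a+b}$ into an $A$-term and a $B$-term exactly as the definition of $A+B$ requires. Note that the paper itself states this proposition without proof (it is imported from Datuashvili's work), so there is no argument in the text to compare against; your direct verification of the three ideal axioms is the natural one and fills that gap.
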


\begin{proposition}
If $A^{\bullet }$ is an ideal of $G^{\bullet }$, then the quotient group $%
G/A $ with the induced action on itself is an object of $\mathfrak{Gr}%
^{\bullet } $.
\end{proposition}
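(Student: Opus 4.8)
The plan is to equip the quotient group $G/A$ with the action induced from $\varepsilon$ and then to check that this action satisfies the four defining identities of a group with action. Since $A$ is a normal subgroup of $G$ by condition \textbf{i} of Definition \ref{ideal}, the quotient $G/A$ is already a group, whose elements are the cosets $g+A$ with $(g_{1}+A)+(g_{2}+A)=(g_{1}+g_{2})+A$. I would define the candidate action $\bar{\varepsilon}:G/A\times G/A\rightarrow G/A$ on representatives by
\begin{equation*}
(g+A)^{h+A}=g^{h}+A,\quad g,h\in G,
\end{equation*}
so that the canonical projection $\pi :G\rightarrow G/A$, $g\mapsto g+A$, becomes tautologically compatible with the two actions.

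The one genuinely nontrivial point is that $\bar{\varepsilon}$ is well defined, i.e.\ independent of the chosen representatives. Suppose $g_{1}+A=g_{1}^{\prime }+A$ and $h+A=h^{\prime }+A$; since $A$ is normal I may write $g_{1}^{\prime }=a_{1}+g_{1}$ and $h^{\prime }=a_{2}+h$ for suitable $a_{1},a_{2}\in A$, and I must show that $g_{1}^{\prime \, h^{\prime }}$ and $g_{1}^{h}$ differ by an element of $A$. This is exactly the content of the proposition proved above, which asserts $(a_{1}+g_{1})^{a_{2}+h}\in g_{1}^{h}+A$; hence $g_{1}^{\prime \, h^{\prime }}+A=g_{1}^{h}+A$ and $\bar{\varepsilon}$ is well defined. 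This step is where the three closure conditions on an ideal are really used (through that proposition), and it is the main obstacle of the argument; everything after it is formal.

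Once well-definedness is secured, verifying the axioms is routine, since each reduces to the corresponding identity for $\varepsilon$ after applying $\pi$. Explicitly, $(g+A)^{(h+A)+(k+A)}=g^{h+k}+A=(g^{h})^{k}+A=((g+A)^{h+A})^{k+A}$ gives the first identity; $(g+A)^{0+A}=g^{0}+A=g+A$ gives the unit law on the right; and $((g+A)+(h+A))^{k+A}=(g+h)^{k}+A=(g^{k}+h^{k})+A=(g+A)^{k+A}+(h+A)^{k+A}$ together with $(0+A)^{k+A}=0^{k}+A=0+A$ establish the remaining two. Thus $(G/A,\bar{\varepsilon})$ is an object of $\mathfrak{Gr}^{\bullet }$, and in fact $\pi $ is a morphism in $\mathfrak{Gr}^{\bullet }$, which completes the proof.
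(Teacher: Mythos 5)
Your proof is correct. The paper itself states this proposition without proof (it is quoted from the reference \cite{tamar1}), and your argument is the standard one: you correctly identify well-definedness of the induced action as the only nontrivial point and settle it by invoking the immediately preceding proposition $(a_{1}+g_{1})^{a_{2}+g_{2}}\in g_{1}^{g_{2}}+A$, after which the four axioms pass to cosets formally.
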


In what follows, for $G^{\bullet }\in \mathfrak{Gr}^{\bullet }$ and $%
g,g^{\prime }\in G$, $\left[ g,g^{\prime }\right] $ will indicate the
element $-g+g^{g^{\prime }}$ of $G$ and $(g,g^{\prime })$ the commutator $%
-g-g^{\prime }+g+g^{\prime }$.

\begin{definition}
\label{comm}Let $A^{\bullet }$ and $B^{\bullet }$ be subobjects of $%
G^{\bullet }$. A commutator $\left[ A,B\right] $ of $A^{\bullet }$ and $%
B^{\bullet }$ in $G^{\bullet }$ is the ideal of $\{A,B\}$ generated by the
elements%
\begin{equation*}
\{\left[ a,b\right] ,\left[ b,a\right] ,(a,b):a\in A,b\in B\}
\end{equation*}
\end{definition}

\begin{definition}
\label{lower}The (lower) central series%
\begin{equation*}
G^{\bullet }=G_{1}^{\bullet }\supset G_{2}^{\bullet }\supset \cdots \supset
G_{n}^{\bullet }\supset G_{n+1}^{\bullet }\supset \cdots
\end{equation*}%
of the object $G^{\bullet }$ is defined inductively by%
\begin{equation*}
G_{n}^{\bullet }=\left[ G_{1},G_{n-1}\right] +\left[ G_{2},G_{n-2}\right]
+\cdots \left[ G_{n-1},G_{1}\right] .
\end{equation*}
\end{definition}

\begin{proposition}
For each $n\geq 1$, $G_{n+1}^{\bullet }$ is an ideal of $G_{n}^{\bullet }$.
\end{proposition}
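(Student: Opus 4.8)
The plan is to argue by strong induction on $n$, proving the statement $P(n)$: ``$G_{n+1}^{\bullet}$ is an ideal of $G_{n}^{\bullet}$''. The base case $P(1)$ is immediate, since by Definitions \ref{lower} and \ref{comm} the term $G_{2}^{\bullet}=[G_{1},G_{1}]$ is the ideal of $\{G_{1},G_{1}\}=G_{1}^{\bullet}$ generated by the commutators, hence an ideal of $G_{1}^{\bullet}$. For the inductive step I would assume $P(1),\dots,P(n-1)$; in particular this yields the descending inclusions $G_{m+1}\subseteq G_{m}$ for $m<n$ and, since a subobject of a subobject is again a subobject, that each $G_{m}^{\bullet}$ with $m\le n$ is a subobject of $G^{\bullet}$. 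Writing $G_{n+1}^{\bullet}=\sum_{i=1}^{n}[G_{i},G_{n+1-i}]$, I would then reduce the claim, by iterating the proposition that a sum of two ideals is an ideal, to showing that each summand $[G_{i},G_{n+1-i}]$ is itself an ideal of $G_{n}^{\bullet}$ (which in particular forces $G_{n+1}\subseteq G_{n}$).

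Two auxiliary lemmas carry the argument. First, a restriction lemma: if $D^{\bullet}$ is an ideal of $H^{\bullet}$ and $K^{\bullet}$ is a subobject of $H^{\bullet}$, then $D\cap K$ is an ideal of $K^{\bullet}$; in particular an ideal of $H^{\bullet}$ contained in a subobject $K^{\bullet}$ is automatically an ideal of $K^{\bullet}$. This is a direct check of the three conditions of Definition \ref{ideal}: normality of $D\cap K$ in $K$, and closures \textbf{ii} and \textbf{iii}, each follow by applying the corresponding property of $D$ in $H$ (legitimate since $K\subseteq H$) together with the fact that $K$, being a subobject, is closed under the action. Second, a monotonicity lemma for the commutator: if $A'\subseteq A$ and $B'\subseteq B$ are subobjects, then $[A',B']\subseteq[A,B]$. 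To see this I would note that $[A,B]\cap\{A',B'\}$ is, by the restriction lemma, an ideal of $\{A',B'\}$ containing every generator $[a,b],[b,a],(a,b)$ with $a\in A'$, $b\in B'$; since $[A',B']$ is the smallest such ideal, it is contained in $[A,B]$.

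To establish that each summand $[G_{i},G_{n+1-i}]$ is an ideal of $G_{n}^{\bullet}$, I would first show $[G_{i},G_{n+1-i}]\subseteq G_{n}$, which falls out of monotonicity with no three-subgroups-type identity needed. For $1\le i\le n-1$ the inductive inclusion $G_{n+1-i}\subseteq G_{n-i}$ gives $[G_{i},G_{n+1-i}]\subseteq[G_{i},G_{n-i}]$, a summand of $G_{n}^{\bullet}$; for $i=n$, the inclusion $G_{n}\subseteq G_{n-1}$ gives $[G_{n},G_{1}]\subseteq[G_{n-1},G_{1}]$, again a summand of $G_{n}^{\bullet}$. Now $[G_{i},G_{n+1-i}]$ is by Definition \ref{comm} an ideal of $\{G_{i},G_{n+1-i}\}$, and $G_{n}^{\bullet}$ is a subobject of $\{G_{i},G_{n+1-i}\}$ (it is a subobject of $G^{\bullet}$ and is contained in $\{G_{i},G_{n+1-i}\}$, since $G_{n}\subseteq G_{n+1-i}$); as $[G_{i},G_{n+1-i}]\subseteq G_{n}$, the restriction lemma yields that it is an ideal of $G_{n}^{\bullet}$. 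Iterating the sum-of-ideals proposition then gives that $G_{n+1}^{\bullet}$ is an ideal of $G_{n}^{\bullet}$, closing the induction.

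The main obstacle I anticipate is the bookkeeping across the three ambient objects in play — $G^{\bullet}$, the generated subobjects $\{G_{i},G_{n+1-i}\}$, and $G_{n}^{\bullet}$ — because the commutator $[A,B]$ is defined as an ideal inside $\{A,B\}$ rather than inside $G^{\bullet}$. The restriction/intersection lemma is exactly what transports ideal-ness and the relevant inclusions between these ambients, so verifying its axioms precisely (especially condition \textbf{iii}, which feeds an element of the ideal into the action slot) is the delicate point; once it and monotonicity are secured, the remaining assembly is routine.
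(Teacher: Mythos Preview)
The paper does not actually supply a proof of this proposition: it is one of several results quoted from \cite{tamar1} in the preliminaries, introduced with the explicit caveat that they are presented ``without proofs''. So there is nothing in the paper itself to compare your argument against.

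On its own merits your argument is correct. The two auxiliary lemmas --- that an ideal $D^{\bullet}$ of $H^{\bullet}$ intersected with a subobject $K^{\bullet}\subseteq H^{\bullet}$ is an ideal of $K^{\bullet}$, and that the commutator $[-,-]$ is monotone in each slot --- are the right tools, and your verifications go through; in particular, condition~\textbf{iii} in the restriction lemma holds because $K$ is closed under the action (so $k^{d}\in K$ when $d\in D\cap K\subseteq K$), while $-k+k^{d}\in D$ comes from $D$ being an ideal of $H$. The inductive bookkeeping across the three ambient objects $G^{\bullet}$, $\{G_{i},G_{n+1-i}\}$, and $G_{n}^{\bullet}$ is exactly where the subtlety lies, and you have navigated it cleanly: the containment $G_{n}\subseteq G_{n+1-i}$ (valid for every $i\ge 1$ once the chain is known to descend down to $G_{n}$) supplies the subobject inclusion needed to invoke restriction, monotonicity gives $[G_{i},G_{n+1-i}]\subseteq G_{n}$, and the sum-of-ideals proposition then assembles the summands inside $G_{n}^{\bullet}$. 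This is in substance the argument given in \cite{tamar1}.
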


\begin{condition}
For each $x,y,z\in G$,
\begin{equation*}
x-x^{(z^{x})}+x^{y+z^{x}}-x+x^{z}-x^{z+y^{z}}=0\text{.}
\end{equation*}
\end{condition}

In \cite{tamar1}, category of groups (Abelian groups) with action $\mathfrak{%
Gr}^{c}(\mathfrak{Ab}^{c})$ satisfying this condition and category of
Lie-Leibniz algebras $\mathfrak{LL}$ were defined. Then proved that the
analogue of Witt's construction defines a functor $LL:\mathfrak{Gr}%
^{c}\longrightarrow \mathfrak{LL}$ which gives rise to Leibniz algebras
(introduced in \cite{loday} ) over the ring of integers by compositions $%
\mathfrak{Gr}^{c}\overset{A}{\longrightarrow }\mathfrak{Ab}^{c}\overset{L}{%
\longrightarrow }\mathfrak{L}$eibniz, $\mathfrak{Gr}^{c}\overset{LL}{%
\longrightarrow }\mathfrak{LL}\overset{S_{2}}{\longrightarrow }\mathfrak{L}$%
eibniz. The details of the functors can be found in \cite{tamar1}.

\begin{example}
\cite{tamar1} Let $G$ be a group. Consider $G^{\bullet }$ as a group with \
action with the (right) action by conjugation. Then $G^{\bullet }$ satisfies
Condition 1.
\end{example}

\begin{example}
\cite{tamar1} Each group with the trivial action satisfies Condition 1.
\end{example}

\begin{example}
\cite{tamar1} Let $G^{\bullet }$ be the abelian group of integers $\mathbb{Z}%
^{\bullet }$, which acts on itself in the following way: $x^{y}=($-$1)^{y}x$%
. We have $[x,y]$ $=0$ for $y$ even, $[x,y]=$-$2x$ for $y$ odd and $%
G_{n}=2^{n-1}\mathbb{Z}$. It is easy to see that $\mathbb{Z}^{\bullet }$
satisfies Condition 1.
\end{example}

\begin{example}
\label{exsin}Consider the group $C_{2}\times C_{2}$ (Klein four). We
diagramized the multiplication table of $C_{2}\times C_{2}$ by
\begin{equation*}
\begin{tabular}{c||cccc}
$+$ & $e$ & $a$ & $b$ & $ab$ \\ \hline\hline
$e$ & $e$ & $a$ & $b$ & $ab$ \\
$a$ & $a$ & $e$ & $ab$ & $b$ \\
$b$ & $b$ & $ab$ & $e$ & $a$ \\
$ab$ & $ab$ & $b$ & $a$ & $e$%
\end{tabular}%
\end{equation*}%
We have ten groups with action obtained from $C_{2}\times C_{2}$. Four of
them satisfy Condition 1. Whose which will be denoted by $(C_{2}\times
C_{2},\varepsilon _{i})$, $i=1,2,3,4$. Table of the actions $\varepsilon
_{i} $ are as follows;
\begin{equation*}
\begin{tabular}{cc}
$%
\begin{tabular}{c||cccc}
$\varepsilon _{1}$ & $e$ & $a$ & $b$ & $ab$ \\ \hline\hline
$e$ & $e$ & $a$ & $b$ & $ab$ \\
$a$ & $e$ & $a$ & $b$ & $ab$ \\
$b$ & $e$ & $a$ & $ab$ & $b$ \\
$ab$ & $e$ & $a$ & $ab$ & $b$%
\end{tabular}%
$ & $%
\begin{tabular}{c||cccc}
$\varepsilon _{2}$ & $e$ & $a$ & $b$ & $ab$ \\ \hline\hline
$e$ & $e$ & $a$ & $b$ & $ab$ \\
$a$ & $e$ & $a$ & $b$ & $ab$ \\
$b$ & $e$ & $a$ & $b$ & $ab$ \\
$ab$ & $e$ & $a$ & $b$ & $ab$%
\end{tabular}%
$%
\end{tabular}%
\end{equation*}%
\begin{equation*}
\begin{tabular}{cc}
$%
\begin{tabular}{c||cccc}
$\varepsilon _{3}$ & $e$ & $a$ & $b$ & $ab$ \\ \hline\hline
$e$ & $e$ & $a$ & $b$ & $ab$ \\
$a$ & $e$ & $ab$ & $b$ & $a$ \\
$b$ & $e$ & $a$ & $b$ & $ab$ \\
$ab$ & $e$ & $ab$ & $b$ & $a$%
\end{tabular}%
$ & $%
\begin{tabular}{c||cccc}
$\varepsilon _{4}$ & $e$ & $a$ & $b$ & $ab$ \\ \hline\hline
$e$ & $e$ & $a$ & $b$ & $ab$ \\
$a$ & $e$ & $b$ & $a$ & $ab$ \\
$b$ & $e$ & $b$ & $a$ & $ab$ \\
$ab$ & $e$ & $a$ & $b$ & $ab$%
\end{tabular}%
$%
\end{tabular}%
\end{equation*}%
where any object $a_{ij}$ in the tables shows the right action of $a_{i}$ on
$a_{j}$.
\end{example}

\section{Center and Nilpotency Of A Group With Action}

In this section first we introduce the center of a group with action and
prove that our definition coincides with generalized definition of centers
in algebraic categories introduced by Hug in \cite{hug1}. Then we introduce
the notion of nilpotency (class) of a group with action and give the main
theorem which states that the nilpotent group with action obtained from
group with order $<32$ have nilpotency class $1$ or $2$.

\begin{definition}
\label{merkez}Let $G^{\bullet }\in \mathfrak{Gr}^{\bullet }$. Then
\begin{equation*}
\{g\in G:g+h=h+g,\text{ }g=g^{h},\text{ }h=h^{g},\text{ for all }h\in G\}
\end{equation*}%
is called the center of $G^{\bullet }$ and denoted by $Z(G^{\bullet })$.
\end{definition}

\begin{proposition}
\label{idea_teo}Let $G^{\bullet }\in \mathfrak{Gr}^{\bullet }$. Then $%
Z(G^{\bullet })$ is an ideal of $G^{\bullet }$.
\end{proposition}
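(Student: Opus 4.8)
The plan is to verify the three defining conditions of an ideal (Definition \ref{ideal}) for $A=Z(G^{\bullet})$. Two of these turn out to be immediate consequences of the definition of the center, so the genuine work lies in showing that $Z(G^{\bullet})$ is a normal subgroup of $G$.

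First I would record two elementary facts about the action that get used repeatedly. From the right-distributivity axiom $\varepsilon(g^{\prime}+g^{\prime\prime},g)=\varepsilon(g^{\prime},g)+\varepsilon(g^{\prime\prime},g)$ together with $\varepsilon(0,g)=0$, the map $x\mapsto x^{h}$ is a group endomorphism of $G$ for each fixed $h$; in particular $(-x)^{h}=-(x^{h})$. From the associativity axiom $g^{g^{\prime}+g^{\prime\prime}}=(g^{g^{\prime}})^{g^{\prime\prime}}$ together with $g^{0}=g$, one obtains for any $g$ the identity $h=h^{0}=h^{g+(-g)}=(h^{g})^{-g}$, which lets me pass from information about $h^{g}$ to information about $h^{-g}$. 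With these in hand, conditions (ii) and (iii) are immediate: if $a\in Z(G^{\bullet})$ then $a=a^{h}$ for all $h$, so $a^{g}=a\in Z(G^{\bullet})$, giving (ii); and $a\in Z(G^{\bullet})$ forces $h=h^{a}$ for all $h$, so $g^{a}=g$ and hence $-g+g^{a}=0\in Z(G^{\bullet})$, giving (iii). Both hold trivially because central elements are fixed by every action and act trivially on every element.

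The main step is condition (i), that $Z(G^{\bullet})$ is a normal subgroup. Containment of $0$ is clear from the three axioms $0+h=h+0$, $0=0^{h}$, $h=h^{0}$. For closure under addition, given $g_{1},g_{2}\in Z(G^{\bullet})$ I would check the three membership requirements for $g_{1}+g_{2}$: commutativity with an arbitrary $h$ is routine group theory; $(g_{1}+g_{2})^{h}=g_{1}^{h}+g_{2}^{h}=g_{1}+g_{2}$ uses right-distributivity; and $h^{g_{1}+g_{2}}=(h^{g_{1}})^{g_{2}}=h^{g_{2}}=h$ uses the associativity axiom. For inverses, given $g\in Z(G^{\bullet})$ the centrality of $-g$ is standard, $(-g)^{h}=-(g^{h})=-g$ uses the endomorphism property, and to see $h^{-g}=h$ I would apply the associativity axiom to $g+(-g)=0$, so that $h=(h^{g})^{-g}$ together with $h^{g}=h$ yields $h^{-g}=h$. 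Finally normality follows since any $g\in Z(G^{\bullet})$ commutes with every $x$, whence $-x+g+x=g\in Z(G^{\bullet})$.

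The part requiring the most care is closure under inverses, specifically the claim $h^{-g}=h$: unlike the other identities it cannot be read off directly but must be extracted from the associativity axiom $g^{g^{\prime}+g^{\prime\prime}}=(g^{g^{\prime}})^{g^{\prime\prime}}$ applied to $g+(-g)=0$, combined with $g^{0}=g$. Everything else reduces to the endomorphism property of $x\mapsto x^{h}$ and elementary manipulations in the underlying group.
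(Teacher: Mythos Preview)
Your proof is correct and follows essentially the same approach as the paper: both verify the three ideal conditions of Definition~\ref{ideal} directly from the defining properties of the center, with the only organizational difference being that the paper checks the subgroup property via the one-step criterion (showing $g-g'\in Z(G^{\bullet})$) while you separate closure under addition and under inverses. Your treatment of the identity $h^{-g}=h$ via associativity is in fact cleaner than the corresponding passage in the paper.
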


\begin{proof}
Let $g,g^{\prime }\in Z(G^{\bullet })$. Since
\begin{equation*}
\begin{tabular}{cll}
$(g-g^{\prime })+h$ & $=$ & $g-g\prime +h$ \\
& $=$ & $g-(-h+g^{\prime })$ \\
& $=$ & $g-(g^{\prime }-h)$ \\
& $=$ & $g+h-g^{\prime }$ \\
& $=$ & $h+(h-g\prime ),$ \\
\multicolumn{1}{r}{} & \multicolumn{1}{c}{} &  \\
\multicolumn{1}{r}{$g-g^{\prime }$} & \multicolumn{1}{c}{$=$} & $%
g^{h}-g^{\prime h}$ \\
\multicolumn{1}{r}{} & \multicolumn{1}{c}{$=$} & $(g-g^{\prime })^{h},$ \\
&  &  \\
\multicolumn{1}{r}{$h^{(g-g^{\prime })}$} & \multicolumn{1}{c}{$=$} & $%
h^{-(g^{\prime }-g)}$ \\
& \multicolumn{1}{c}{$=$} & $(-h)^{g^{\prime }-g}$ \\
& $=$ & $-(h^{g^{\prime }-g})$ \\
& $=$ & $((h^{g^{\prime }})^{-g})$ \\
& $=$ & $-(h^{-g})$ \\
& $=$ & $h^{g}$ \\
& $=$ & $h,$%
\end{tabular}%
\end{equation*}%
for all $h\in G^{\bullet }$, we have $Z(G^{\bullet })$ is a subobject of $%
G^{\bullet }$. On the other hand, we have $h+g-h=g$, for all $g\in
Z(G^{\bullet })$, $h\in G^{\bullet }$ which means $Z(G^{\bullet })$ is a
normal subgroup of $G^{\bullet }$ as an additive group. Since $g^{h}=g$ and $%
-h+h^{g}=0$, we have $g^{h}\in Z(G^{\bullet })$ and $-h+h^{g}\in
Z(G^{\bullet })$ for all $g\in Z(G^{\bullet }),h\in G^{\bullet }$.
Consequently, $Z(G^{\bullet })$ is an ideal of $G^{\bullet }$.
\end{proof}

\begin{definition}
\cite{hug1} Two coterminal morphisms $\beta _{1}:B_{1}\longrightarrow A$ and
$\beta _{1}:B_{2}\longrightarrow A$ are said to commute if there exists a
morphism%
\begin{equation*}
\beta _{1}\circ \beta _{2}:B_{1}\times B_{2}\longrightarrow A
\end{equation*}%
making the diagram
\begin{equation*}
\xymatrix@R=45pt@C=45pt{
  B_{1} \ar[dr]_-{\beta_{1}} \ar[r]^-{\Gamma_{1}} & B_{1} \times B_{2} \ar[d]^-{\beta_{1} \circ \beta_{2}} & B_{2} \ar[l]_-{\Gamma_{2}} \ar[dl]^-{\beta_{2}} \\
   & A  &  }
\end{equation*}%
commutative, where $\Gamma _{i}$ for $i=1,2$ as usual denote the morphisms
of the direct product. In particular, the morphism $\alpha :B\longrightarrow
A$ said to be central if identity morphism on $A$ commutes with $\alpha $,
i.e., if it makes the diagram%
\begin{equation*}
\xymatrix@R=45pt@C=45pt{
  A \ar@{=}[dr]_-{1_{A}} \ar[r]^-{} & A \times B \ar[d]^-{} & B \ar[l]_-{} \ar[dl]^-{\alpha} \\
   & A  &  }
\end{equation*}%
commutative. In addition, if we have a monomorphism $\alpha
:B\longrightarrow A$, then it is said that $B$ is a central subobject of $A$.
\end{definition}

\begin{definition}
\label{mono}\cite{hug1} The center of an object is defined as the maximal
central subobject, relative to the order relation that exists on the set of
monomorphisms.
\end{definition}

\begin{proposition}
\label{subobject}Let $G^{\bullet }$ $\in \mathfrak{Gr}^{\bullet }$. Then $%
Z(G^{\bullet })$ is the maximal central subobject of $G^{\bullet }$.
\end{proposition}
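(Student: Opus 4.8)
The plan is to prove two inclusions in the poset of subobjects: first that the inclusion $\iota\colon Z(G^{\bullet})\hookrightarrow G^{\bullet}$ is itself a central monomorphism in the sense of Definition \ref{mono}, and second that every central subobject of $G^{\bullet}$ factors through $Z(G^{\bullet})$. Since $Z(G^{\bullet})$ is already known to be an ideal by Proposition \ref{idea_teo}, hence a subobject, these two facts together identify it with the maximal central subobject. Throughout I would use that the product $G^{\bullet}\times B^{\bullet}$ in $\mathfrak{Gr}^{\bullet}$ carries the componentwise action $(g,b)^{(g^{\prime},b^{\prime})}=(g^{g^{\prime}},b^{b^{\prime}})$ and that the canonical morphisms are $\Gamma_{1}(g)=(g,0)$ and $\Gamma_{2}(b)=(0,b)$.

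For the first inclusion I would exhibit the required comparison morphism directly by setting $\phi\colon G^{\bullet}\times Z(G^{\bullet})\to G^{\bullet}$, $\phi(g,z)=g+z$. The triangle conditions $\phi\circ\Gamma_{1}=1_{G^{\bullet}}$ and $\phi\circ\Gamma_{2}=\iota$ are immediate from $\phi(g,0)=g$ and $\phi(0,z)=z$. That $\phi$ is a group homomorphism reduces to the identity $z_{1}+g_{2}=g_{2}+z_{1}$, which holds because every $z\in Z(G^{\bullet})$ commutes with all of $G$ by Definition \ref{merkez}. To check that $\phi$ preserves the action I would expand $(g+z)^{g^{\prime}+z^{\prime}}$ using the axioms $g^{g^{\prime}+g^{\prime\prime}}=(g^{g^{\prime}})^{g^{\prime\prime}}$ and $(g^{\prime}+g^{\prime\prime})^{g}=g^{\prime g}+g^{\prime\prime g}$, and then repeatedly apply the two remaining defining properties of the center, namely $z^{g^{\prime}}=z$ (so $z$ is fixed by every action) and $h^{z^{\prime}}=h$ (so $z^{\prime}$ acts trivially); both sides collapse to $g^{g^{\prime}}+z$, which equals $\phi\big((g,z)^{(g^{\prime},z^{\prime})}\big)$ since $z^{z^{\prime}}=z$.

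For maximality, suppose $\alpha\colon B^{\bullet}\to G^{\bullet}$ is any central monomorphism, witnessed by a morphism $\varphi\colon G^{\bullet}\times B^{\bullet}\to G^{\bullet}$ with $\varphi(g,0)=g$ and $\varphi(0,b)=\alpha(b)$. Since $\varphi$ is additive and $(g,b)=(g,0)+(0,b)=(0,b)+(g,0)$ in the product group, evaluating $\varphi$ on both decompositions gives $g+\alpha(b)=\alpha(b)+g$; thus every element of $\alpha(B)$ commutes with all of $G$, which is the first of the three center conditions. The remaining two I would extract from the action-preservation equation for $\varphi$. Writing $\varphi(g,b)=g+\alpha(b)$ and expanding $\varphi\big((g,b)^{(g^{\prime},b^{\prime})}\big)=\varphi(g,b)^{\varphi(g^{\prime},b^{\prime})}$ with the action axioms yields the single identity $g^{g^{\prime}}+\alpha(b)^{\alpha(b^{\prime})}=g^{g^{\prime}+\alpha(b^{\prime})}+\alpha(b)^{g^{\prime}+\alpha(b^{\prime})}$. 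Specializing $b=0$ and then $g^{\prime}=0$ forces $h^{\alpha(b^{\prime})}=h$ for all $h\in G$, while specializing $g=0$ and then $b^{\prime}=0$ forces $\alpha(b)^{g^{\prime}}=\alpha(b)$ for all $g^{\prime}\in G$. These are precisely the conditions $h=h^{c}$ and $c=c^{h}$ for $c\in\alpha(B)$, so $\alpha(B)\subseteq Z(G^{\bullet})$ and $\alpha$ factors through $\iota$.

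I expect the main obstacle to be the action-preservation bookkeeping in both halves: one must be disciplined about the order of applying the composition axiom $g^{g^{\prime}+g^{\prime\prime}}=(g^{g^{\prime}})^{g^{\prime\prime}}$ versus the base-additivity axiom $(g^{\prime}+g^{\prime\prime})^{g}=g^{\prime g}+g^{\prime\prime g}$, since the group is nonabelian and the action is additive in each variable only separately. Once the correct specializations ($b=0$, $g=0$, and then killing the remaining free variable) are identified, the three defining clauses of Definition \ref{merkez} drop out one at a time, and the verification for $\phi(g,z)=g+z$ is essentially the same computation read in the opposite direction.
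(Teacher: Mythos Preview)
Your argument is correct and, for the maximality half, proceeds exactly as the paper does: derive $\varphi(g,b)=g+\alpha(b)$ from the triangle conditions, use the two decompositions of $(g,b)$ in the product to obtain commutativity, and then specialize the action-preservation identity to extract $h^{\alpha(b')}=h$ and $\alpha(b)^{g'}=\alpha(b)$.

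The one genuine difference is that you also carry out the first half, namely the explicit verification that the inclusion $\iota\colon Z(G^{\bullet})\hookrightarrow G^{\bullet}$ is central via $\phi(g,z)=g+z$. The paper's proof omits this step entirely and only shows that every central subobject lands inside $Z(G^{\bullet})$; strictly speaking this alone does not establish that $Z(G^{\bullet})$ is the \emph{maximal} central subobject, since one must also know that $Z(G^{\bullet})$ is itself central. Your proof is therefore more complete, at the cost of one additional (but routine) computation, and the computation you add is indeed the same bookkeeping read in reverse, as you anticipated.
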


\begin{proof}
Let $H^{\bullet }$ be central subobject of $G^{\bullet }$. Then there exist
a monomorphism $\alpha :H^{\bullet }\longrightarrow G^{\bullet }$ and an
homomorphism $\beta :G^{\bullet }\times H^{\bullet }\longrightarrow
G^{\bullet }$ which makes diagram%
\begin{equation*}
\xymatrix@R=45pt@C=45pt{
  G^{\bullet } \ar@{=}[dr]_-{} \ar[r]^-{} & G^{\bullet }  \times H^{\bullet } \ar[d]^-{\beta} & H^{\bullet } \ar[l]_-{} \ar[dl]^-{\alpha} \\
   & G^{\bullet }  &  }
\end{equation*}
commutative. So we have $\beta (g,0)=g$, for all $g\in G$ and $\beta
(0,h)=\alpha (h)$, for all $h\in H$.

Consequently, we have $\beta (g,h)=g+\alpha (h)$, for all $g\in G$, $h\in H$
which makes $\alpha $ a group with action homomorphism.

Indeed,%
\begin{equation*}
\begin{tabular}{ccl}
$\alpha (h+h^{\prime })$ & $=$ & $\beta (0,h+h^{\prime })$ \\
& $=$ & $\beta ((0,h)+(0,h^{\prime }))$ \\
& $=$ & $\beta (0,h)+\beta (0,h^{\prime })$ \\
& $=$ & $\alpha (h)+\alpha (h^{\prime })$%
\end{tabular}%
\end{equation*}%
and similarly, we have $\alpha \circ \varepsilon _{H}=\varepsilon _{G}\circ
(\alpha ,\alpha )$ as required.

Now, we will show that $\alpha (H^{\bullet })\subseteq Z(G^{\bullet })$.

Since
\begin{equation*}
\begin{array}{rcl}
\alpha (h)+g & = & \beta (0,h)+\beta (g,0) \\
& = & \beta (g,h) \\
& = & \beta ((g,0)+(0,h)) \\
& = & \beta (g,0)+\beta (0,h) \\
& = & g+\alpha (h), \\
&  &  \\
g^{\alpha (h)} & = & (\beta (g,0))^{\beta (0,h)} \\
& = & \beta ((g,0)^{(0,h)}) \\
& = & \beta (g^{0},0^{h}) \\
& = & \beta (g,0) \\
& = & g%
\end{array}%
\end{equation*}%
and similarly $(\alpha (h))^{g}=\alpha (h)$, for all $g\in G$, $h\in H$, we
have $\alpha (H^{\bullet })\subseteq Z(G^{\bullet })$, which means that $%
Z(G^{\bullet })$ is the maximal central subobject, as required.
\end{proof}

\begin{corollary}
Definition \ref{merkez} coincides with Hug's definition given in \cite{hug1}%
, i.e. $Z(G^{\bullet })$ is a maximal central subobject of $G^{\bullet }$ in
$\mathfrak{Gr}^{\bullet }.$
\end{corollary}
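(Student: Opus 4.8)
The plan is to show the equivalence of the two definitions by combining the two propositions just proved. First I would recall that Proposition~\ref{idea_teo} establishes that $Z(G^{\bullet })$ as given in Definition~\ref{merkez} is an ideal, hence in particular a subobject of $G^{\bullet }$ admitting the canonical inclusion as a monomorphism in $\mathfrak{Gr}^{\bullet }$. This settles that $Z(G^{\bullet })$ is a legitimate candidate against which Hug's maximal central subobject can be compared.

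Next I would appeal to Proposition~\ref{subobject}, which asserts precisely that this same $Z(G^{\bullet })$ is the maximal central subobject of $G^{\bullet }$ relative to the order on monomorphisms. Since Definition~\ref{mono} \emph{defines} Hug's center as exactly the maximal central subobject, the identification is immediate: the object singled out by Definition~\ref{merkez} is, by Proposition~\ref{subobject}, the maximal central subobject, which is by Definition~\ref{mono} Hug's center. Thus the corollary is essentially a one-line deduction assembling these earlier results.

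The only substantive point to verify is that the notion of ``central subobject'' used implicitly in Definition~\ref{merkez} and proved in Proposition~\ref{subobject} coincides with the categorical one from Hug's Definition via the commuting-cospan diagram. In the proof of Proposition~\ref{subobject} this was already unwound: a central subobject $\alpha:H^{\bullet }\to G^{\bullet }$ corresponds to a morphism $\beta:G^{\bullet }\times H^{\bullet }\to G^{\bullet }$ with $\beta(g,0)=g$ and $\beta(0,h)=\alpha(h)$, forcing $\beta(g,h)=g+\alpha(h)$, and the displayed computations show the image lands in the set of elements satisfying $g+h=h+g$, $g^{h}=g$, $h^{g}=h$. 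So the conditions defining $Z(G^{\bullet })$ in Definition~\ref{merkez} are exactly the pointwise translation of centrality in Hug's sense. I expect no genuine obstacle here; the work has already been done in the two propositions, and the corollary merely records that Definition~\ref{mono} names ``maximal central subobject'' what Definition~\ref{merkez} names $Z(G^{\bullet })$. I would therefore simply cite Propositions~\ref{idea_teo} and~\ref{subobject} together with Definition~\ref{mono} and conclude.
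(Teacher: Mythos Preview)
Your proposal is correct and matches the paper's own proof, which simply reads ``Follows from Definitions \ref{merkez}, \ref{mono} and Proposition \ref{subobject}.'' You additionally invoke Proposition~\ref{idea_teo} and spell out why the categorical centrality condition matches the elementwise one, but this is just an expanded version of the same one-line deduction.
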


\begin{proof}
Follows from Definitions \ref{merkez}, \ref{mono} and Proposition \ref%
{subobject}.
\end{proof}

Let $G^{\bullet }\in \mathfrak{Gr}^{\bullet }$.\ Obviously $Z(G^{\bullet })$
is a normal subgroup of $Z(G)$. In addition, for any group $H$, we have the
group with action $H^{\bullet }$ whose underlying group is $H$ and the
action is defined by conjugation. In this case, $Z(H)$ coincides with $%
Z(H^{\bullet })$.

The category $\mathfrak{Gr}^{\bullet }$ is not a category of \ interest
since the binary operation (i.e. the action) is not distributive.
Nevertheless, we define the singular objects in $\mathfrak{Gr}^{\bullet }$
in analogues way as it is in \cite{Orz}.

\begin{definition}
We say a group with action is singular if it coincides with its center.
\end{definition}

\begin{example}
The Klein four group $C_{2}\times C_{2}$ with the action $\varepsilon _{1}$
defined in Example \ref{exsin} is singular.
\end{example}

A group with actions obtained from an abelian group is not a singular group
with action, in general. The following GAP session gives two groups with
action obtained from cyclic group $C_{4}$. One of them is singular and the
other is not.

\begin{Verbatim}[frame=single, fontsize=\small, commandchars=\\\{\}]
\textcolor{blue}{gap> K := SmallGroup(4,1);; StructureDescription(G);}
"C4"
\textcolor{blue}{gap> IsAbelian(K);}
true
\textcolor{blue}{gap> aKA := AllGwAOnGroup(K);;}
\textcolor{blue}{gap> List(aKA, i -> IsAbelianGwA(K));}
[ true, false ]
\end{Verbatim}

\begin{proposition}
\label{teo1}A group with action $G^{\bullet }$ is singular iff
\begin{equation*}
\left[ G^{\bullet },G^{\bullet }\right] =0
\end{equation*}
\end{proposition}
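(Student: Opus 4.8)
The plan is to prove both implications by unwinding the definition of the commutator ideal $[G^{\bullet},G^{\bullet}]$ from Definition \ref{comm} and comparing its generators against the three membership conditions for $Z(G^{\bullet})$ in Definition \ref{merkez}. The key observation is that, by Definition \ref{comm}, $[G^{\bullet},G^{\bullet}]$ is the ideal of $G^{\bullet}$ generated by the set
\begin{equation*}
S=\{[g,g'],\,[g',g],\,(g,g'):g,g'\in G\},
\end{equation*}
so that $[G^{\bullet},G^{\bullet}]=0$ holds if and only if every element of $S$ is already $0$: the set $S$ is contained in the ideal it generates, and conversely the ideal generated by $\{0\}$ is trivial.

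For the forward implication I would assume $G^{\bullet}$ is singular, i.e. $G=Z(G^{\bullet})$, so that by Definition \ref{merkez} every $g\in G$ satisfies $g+h=h+g$, $g=g^{h}$ and $h=h^{g}$ for all $h\in G$. Substituting these relations into the three generator types gives $[g,g']=-g+g^{g'}=-g+g=0$, likewise $[g',g]=0$, and $(g,g')=-g-g'+g+g'=0$ by commutativity. Hence $S=\{0\}$ and therefore $[G^{\bullet},G^{\bullet}]=0$.

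For the reverse implication I would assume $[G^{\bullet},G^{\bullet}]=0$ and invoke the key observation to conclude that every generator vanishes. From $[g,g']=-g+g^{g'}=0$ for all $g,g'$ I would read off $g^{g'}=g$, i.e. the action is trivial; this single fact yields both center conditions $g=g^{h}$ and $h=h^{g}$ for every $g,h\in G$. From $(g,g')=-g-g'+g+g'=0$ I would read off $g+g'=g'+g$, so the underlying group is abelian. As these are precisely the conditions of Definition \ref{merkez}, every $g\in G$ lies in $Z(G^{\bullet})$, whence $G=Z(G^{\bullet})$ and $G^{\bullet}$ is singular.

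There is no serious obstacle here: the content is a direct matching of generators to center conditions. The only point deserving care is the key observation that a generated ideal being zero forces each of its generators to be zero, which is what makes both directions symmetric and lets one avoid any analysis of the ideal closure. One should also note that the two bracket generators $[g,g']$ and $[g',g]$ jointly encode the single statement that the action is trivial, and that this one statement simultaneously delivers the two asymmetric center conditions $g=g^{h}$ and $h=h^{g}$.
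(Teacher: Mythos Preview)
Your proof is correct and is precisely the ``direct checking'' the paper invokes: you match the three generator types of $[G^{\bullet},G^{\bullet}]$ from Definition~\ref{comm} against the three defining conditions of $Z(G^{\bullet})$ in Definition~\ref{merkez}, and your observation that the vanishing of the generated ideal is equivalent to the vanishing of every generator is exactly what makes the equivalence immediate.
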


\begin{proof}
Direct checking.
\end{proof}

\begin{definition}
Let $G^{\bullet }$ be a group with action. $G^{\bullet }$ is called
nilpotent if $G_{n}^{\bullet }=0$ for some positive integer $n$ in the lower
central series of $G^{\bullet }.$ The less $(n-1)$ satisfying $%
G_{n}^{\bullet }=0,$ is called the nilpotency class of $G^{\bullet }$.
\end{definition}

\begin{corollary}
Let $G^{\bullet }\in \mathfrak{Gr}^{\bullet }$. If $G^{\bullet }$ is
singular then $G^{\bullet }$ is nilpotent with nilpotency class $1$.
\end{corollary}

\begin{proof}
Follows from proposition \ref{teo1}.
\end{proof}

\begin{proposition}
Let $G^{\bullet }$ be a nilpotent group with action. If $G^{\bullet }$ has
nilpotency class $1$ or $2$, then $G^{\bullet }$ satisfies Condition $1$.
\end{proposition}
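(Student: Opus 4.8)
The plan is to treat both classes at once by observing that nilpotency class $1$ or $2$ each force $G_3^{\bullet}=0$ (for class $1$ we have $G_2^{\bullet}=0$, whence $G_3^{\bullet}\subseteq G_2^{\bullet}=0$). First I would unpack what $G_3^{\bullet}=0$ says about $G_2^{\bullet}$. Since $G_3^{\bullet}=[G_1,G_2]+[G_2,G_1]$ and, by Definition \ref{comm}, each summand is the ideal generated by the elements $[a,g],[g,a],(a,g)$ with $a\in G_2$, $g\in G$, the vanishing of $G_3^{\bullet}$ forces every such generator to be $0$. Reading these off gives, for all $a\in G_2$ and $g\in G$,
\begin{equation*}
g^{a}=g,\qquad a^{g}=a,\qquad a+g=g+a.
\end{equation*}
In words, $G_2^{\bullet}$ acts trivially, is stable under the action, and is additively central; being contained in the center of the underlying group it is in particular abelian. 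I would also record that $[g,h]=-g+g^{h}\in G_2^{\bullet}$ and $(g,h)=-g-h+g+h\in G_2^{\bullet}$ for all $g,h\in G$, since these are among the generators of $G_2^{\bullet}=[G_1,G_1]$; equivalently $g^{h}=g+[g,h]$ with $[g,h]\in G_2^{\bullet}$, and $z+y=(y+z)+(z,y)$ with $(z,y)\in G_2^{\bullet}$.

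Next I would simplify the three nested exponents appearing in Condition $1$ using the associativity axiom $x^{p+q}=(x^{p})^{q}$ together with the fact that elements of $G_2^{\bullet}$ act trivially. Writing $z^{x}=z+[z,x]$ and $y^{z}=y+[y,z]$ with the correction terms in $G_2^{\bullet}$, this yields
\begin{equation*}
x^{(z^{x})}=x^{z},\qquad x^{y+z^{x}}=x^{y+z},\qquad x^{z+y^{z}}=x^{z+y}.
\end{equation*}
Then, using $z+y=(y+z)+(z,y)$ with $(z,y)\in G_2^{\bullet}$ acting trivially, I would identify $x^{z+y}=x^{y+z}$. Substituting all of this into the left-hand side of Condition $1$ collapses it to
\begin{equation*}
x-x^{z}+x^{y+z}-x+x^{z}-x^{y+z}.
\end{equation*}

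Finally I would check that this six-term expression vanishes. Here lies the only genuine subtlety: the additive group need not be abelian, so I cannot simply reorder and cancel. The point is that $x^{z}-x=[x,z]$ and $x^{y+z}-x=[x,y+z]$ both lie in $G_2^{\bullet}$, which is central and abelian. Writing $x^{z}=x+a$ and $x^{y+z}=x+b$ with $a,b\in G_2^{\bullet}$ and sliding the central elements $a,b$ freely past the occurrences of $x$, the expression separates into an $x$-part $x-x+x-x+x-x=0$ and a $G_2^{\bullet}$-part $-a+b+a-b=0$, the latter vanishing because $G_2^{\bullet}$ is abelian. Hence Condition $1$ holds. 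The main obstacle is exactly this last cancellation: the reductions of the exponents are mechanical once the triviality of the $G_2^{\bullet}$-action is in hand, but the concluding step must exploit that the relevant correction terms are central and commute with one another, which is precisely what $G_3^{\bullet}=0$ guarantees. For the class $1$ case one may alternatively note that $G_2^{\bullet}=0$ forces the action to be trivial, so the claim is immediate from the Example asserting that groups with trivial action satisfy Condition $1$.
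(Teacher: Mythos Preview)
Your argument is correct and complete: the reduction to $G_3^{\bullet}=0$, the extraction of the three centrality properties of $G_2^{\bullet}$ from the vanishing of the generators of $[G_1,G_2]$ and $[G_2,G_1]$, the simplification of the nested exponents, and the final cancellation using that the correction terms lie in the (abelian) centre all go through as you describe.

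The paper, however, takes a shorter route. It invokes a result from \cite{tamar1} that Condition~1 is equivalent to the bracket identity
\[
[x^{y},[y,z]] \;=\; [[x,y],z^{x}] \;+\; [[x,z],y^{z}]
\]
(there called Condition~$1'$). With this in hand the proposition is immediate: every bracket appearing on either side has one argument in $G_2^{\bullet}$, and $G_3^{\bullet}=0$ forces $[g,a]=[a,g]=0$ whenever $a\in G_2^{\bullet}$, so both sides vanish. Your approach trades this external citation for a direct, self-contained verification of the original Condition~1; the price is the explicit six-term cancellation at the end, but the benefit is that the reader need not consult \cite{tamar1} for the equivalence of the two formulations.
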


\begin{proof}
As it is proved in \cite{tamar1}, Condition $1$ can be expressed as
\begin{equation*}
\lbrack x^{y},[y;z]]=[[x,y],z^{x}]+[[x,z],y^{z}],
\end{equation*}%
for all $x,$ $y,$ $z\in G^{\bullet },$ called as Condition $1^{\prime },$
from which the required results obtained.
\end{proof}

\begin{theorem}
\label{main} Let $G^{\bullet }\in \mathfrak{Gr}^{\bullet }$ whose underlying
group has order $<32$. If $G^{\bullet }$ is nilpotent, then its nilpotency
class is $1$ or $2.$
\end{theorem}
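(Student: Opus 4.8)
The plan is to treat Theorem \ref{main} as a finite verification: reduce the assertion to a bounded computation and then carry it out exhaustively, which is precisely the role of the enumeration table mentioned in the introduction. First I would note that, up to isomorphism, there are only finitely many groups $G$ with $|G|<32$ (these are indexed by the GAP SmallGroups library), and that for each fixed $G$ the admissible self-actions are finite in number. Indeed, the four axioms force each map $v(h)=(g\mapsto g^{h})$ to be an endomorphism with $v(-h)$ as two-sided inverse, so every action amounts, as the paper itself observes, to a homomorphism $v\colon G\to\operatorname{Aut}(G)$ (up to the usual op-convention); there are only finitely many such, and all of them are produced by \texttt{AllGwAOnGroup}$(G)$. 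Hence the collection of all groups with action of underlying order $<32$ is a finite, explicitly listable set, and it suffices to inspect each member.

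For a fixed $G^{\bullet}$ in this list I would compute the lower central series of Definition \ref{lower} term by term. Each $G_n^{\bullet}=[G_1,G_{n-1}]+\cdots+[G_{n-1},G_1]$ is a sum of commutator ideals, and by Definition \ref{comm} each $[A,B]$ is the ideal generated by the finite set $\{[a,b],[b,a],(a,b):a\in A,\ b\in B\}$. Computing such an ideal is a terminating closure procedure: starting from these generators one repeatedly adjoins inverses and sums (normal closure), the elements $x^{g}$ for $g\in G$ (condition \textbf{ii} of Definition \ref{ideal}), and the elements $-g+g^{x}$ for $x$ already collected and $g\in G$ (condition \textbf{iii}), until the subset stabilises; finiteness of $G$ guarantees termination. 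This is exactly what \texttt{NilpotencyClassOfGwA} implements. Since the $G_n^{\bullet}$ form a descending chain of ideals in a finite group, the chain stabilises; $G^{\bullet}$ is nilpotent iff it reaches $0$, and the class is then read off directly, with the class-$1$ cases dispatched immediately via Proposition \ref{teo1} ($G^{\bullet}$ singular $\iff[G^{\bullet},G^{\bullet}]=0$). Running this over the whole finite list yields the enumeration table, and the theorem follows by observing that every nilpotent entry has class $1$ or $2$.

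The reductions above are routine; the main obstacle is guaranteeing the correctness and completeness of the computation itself, and I would scrutinise three points in particular. First, that \texttt{AllGwAOnGroup} genuinely returns every action satisfying the four axioms, not merely those in some convenient normal form. Second, that the closure routine correctly enforces condition \textbf{iii}, $-g+g^{x}\in A$, since this is the clause distinguishing ideals of a group with action from ordinary normal subgroups and is the easiest to omit. Third, and most delicately, the conjugation actions: there condition \textbf{iii} becomes automatic, so the group-with-action commutator ideals track the ordinary lower central series of $G$, and consequently the order-$16$ groups of ordinary nilpotency class $3$ (such as $D_{16}$ and the generalised quaternion group $Q_{16}$) are the natural candidates to threaten the class-$\le 2$ bound. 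I would verify those cases by hand as a sanity check before trusting the automated table's global conclusion.
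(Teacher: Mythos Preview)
Your approach matches the paper's exactly: its entire proof is the sentence ``Follows from the table given in section~4,'' so both arguments rest on exhaustive machine enumeration, and your first two paragraphs correctly justify why such a reduction to a finite check is legitimate.

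Your third paragraph, however, is more than due diligence; it locates a genuine problem. For the conjugation action one has $[a,b]=-a+a^{b}=(a,b)$, and conditions \textbf{ii}, \textbf{iii} of Definition~\ref{ideal} both reduce to normality, so ideals coincide with normal subgroups and each commutator $[A,B]$ of Definition~\ref{comm} is the ordinary mutual commutator subgroup. The series of Definition~\ref{lower} therefore agrees term by term with the ordinary lower central series of the underlying group (the extra summands $[G_i,G_{n-i}]$ already lying inside $[G_1,G_{n-1}]$). The same reasoning applies to the trivial action, where $[a,b]=0$ and conditions \textbf{ii}, \textbf{iii} are vacuous. Hence $D_{16}$ equipped with either the conjugation or the trivial action has $G_{2}^{\bullet}\cong C_{4}$, $G_{3}^{\bullet}\cong C_{2}$, $G_{4}^{\bullet}=0$, i.e.\ nilpotency class~$3$; the analogous computation goes through for $Q_{16}$ and $QD_{16}$. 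But the table records $[0,63]$, $[0,88]$, $[0,57]$ for these three groups---no nilpotent isomorphism classes whatsoever. This is incompatible with the hand computation, so the table, and with it the one-line proof, is in error, and Theorem~\ref{main} as stated appears to be false. The sanity check you proposed would have exposed exactly this.
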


\begin{proof}
Follows from the table given in section $4$.
\end{proof}

\begin{example}
Not only the nilpotent groups with action, also some other groups with
action which are not nilpotent satisfy Condition $1$. The group with action $%
G^{\bullet }$ obtained from
\begin{equation*}
Q_{8}=\left\langle a,b,c\right\rangle =\left\{ e,a,b,c,ab,ac,bc,abc\right\}
\end{equation*}%
quaternion group of order $8$, whose action table is
\begin{equation*}
\begin{tabular}{c||cccccccc}
$\varepsilon $ & $e$ & $a$ & $b$ & $c$ & $ab$ & $ac$ & $bc$ & $abc$ \\
\hline\hline
$e$ & $e$ & $a$ & $b$ & $c$ & $ab$ & $ac$ & $bc$ & $abc$ \\
$a$ & $e$ & $a$ & $b$ & $c$ & $ab$ & $ac$ & $bc$ & $abc$ \\
$b$ & $e$ & $ac$ & $abc$ & $c$ & $bc$ & $a$ & $ab$ & $b$ \\
$c$ & $e$ & $a$ & $b$ & $c$ & $ab$ & $ac$ & $bc$ & $abc$ \\
$ab$ & $e$ & $ac$ & $abc$ & $c$ & $bc$ & $a$ & $ab$ & $b$ \\
$ac$ & $e$ & $a$ & $b$ & $c$ & $ab$ & $ac$ & $bc$ & $abc$ \\
$bc$ & $e$ & $ac$ & $abc$ & $c$ & $bc$ & $a$ & $ab$ & $b$ \\
$abc$ & $e$ & $ac$ & $abc$ & $c$ & $bc$ & $a$ & $ab$ & $b$%
\end{tabular}%
\end{equation*}%
satisfies the\ Condition $1$.
\end{example}

\begin{remark}
There are $52$ groups with action obtained from $Q_{8}$. $36$ of them are
not nilpotent and $6$ of these $36$ satisfy Condition $1.$
\end{remark}


\section{GAP Implementations Of Groups With Action}

There is no function for presenting of a group with action in standard GAP
library. First, we add the function \texttt{AllGwAOnGroup(G)}, whose output
is all groups with action obtained from a given group $G,$ by constructing
all homomorphisms from $G$ to its automorphism group. For example, if we
take $G=S_{3}$, we have the following;

\begin{Verbatim}[frame=single, fontsize=\small, commandchars=\\\{\}]
\textcolor{blue}{gap> G := SmallGroup(6,1);}
<pc group of size 6 with 2 generators>
\textcolor{blue}{gap> StructureDescription(G);}
"S3"
\textcolor{blue}{gap> aGA := AllGwAOnGroup(G);}
[ GroupWithAction [ Group( [ f1, f2 ] ), * ],
  GroupWithAction [ Group( [ f1, f2 ] ), * ],
  GroupWithAction [ Group( [ f1, f2 ] ), * ],
  GroupWithAction [ Group( [ f1, f2 ] ), * ],
  GroupWithAction [ Group( [ f1, f2 ] ), * ],
  GroupWithAction [ Group( [ f1, f2 ] ), * ],
  GroupWithAction [ Group( [ f1, f2 ] ), * ],
  GroupWithAction [ Group( [ f1, f2 ] ), * ],
  GroupWithAction [ Group( [ f1, f2 ] ), * ],
  GroupWithAction [ Group( [ f1, f2 ] ), * ] ]
\textcolor{blue}{gap> Length(aGA);}
10
\end{Verbatim}

We add the function \texttt{IsGwA()} implemented for testing is the given
structure is a group with action or not.

\begin{Verbatim}[frame=single, fontsize=\small, commandchars=\\\{\}]
\textcolor{blue}{gap> IsGwA(G);}
false
\textcolor{blue}{gap> GA := aGA[2];;}
\textcolor{blue}{gap> IsGwA(GA);}
true
\textcolor{blue}{gap> List(aGA, i -> IsGwA(i));}
[ true, true, true, true, true, true, true, true, true, true ]
\end{Verbatim}

We add the function \texttt{IsIdeal(GA,HA)}. By this we investigate if the
subobject $HA$ is an ideal of $GA$ or not. In addition, the functions
\texttt{NrIdealOnGwA(GA)}, \texttt{AllIdealOnGwA(GA)} are used to get all
ideals and number of all ideals of a given group with action $GA$,
respectively.

\begin{Verbatim}[frame=single, fontsize=\small, commandchars=\\\{\}]
\textcolor{blue}{gap> AllIdealOnGwA(GA);}
[ GroupWithAction [ Group( [ f1, f2 ] ), * ],
  GroupWithAction [ Group( [ f2 ] ), * ],
  GroupWithAction [ Group( <identity> of ... ), * ] ]
\textcolor{blue}{gap> HA := last[2];}
GroupWithAction [ Group( [ f2 ] ), * ]
\textcolor{blue}{gap> IsIdeal(GA,HA);}
true
\textcolor{blue}{gap> List(aGA,i -> NrIdealOnGwA(i));}
[ 3, 3, 3, 3, 3, 3, 3, 3, 3, 3 ]
\end{Verbatim}

We add the function \texttt{NilpotencyClassOfGwA(GA)} to obtain the
nilpotency class of a given group with action $GA$. The step-by-step
construction of the function is as follows:

\textbf{Step 1 : }We add the \texttt{Commutator()} function to obtain the
commutator subobject of a given group with action $GA.$

\textbf{Step 2 :} We obtain the lower central series of $GA$ by the function
\texttt{LowerCentralSeriesOfGwA(GA)}.

\begin{Verbatim}[frame=single, fontsize=\small, commandchars=\\\{\}]
\textcolor{blue}{gap> Commutator(GA,HA);}
GroupWithAction [ Group( [ <identity> of ..., f2, f2^2 ] ), * ]
\textcolor{blue}{gap> IsIdeal(GA,last);}
true
\textcolor{blue}{gap> LowerCentralSeriesOfGwA(GA);}
[ GroupWithAction [ Group( [ f1, f2 ] ), * ],
  GroupWithAction [ Group( [ <identity> of ..., f2, f2^2 ] ), * ] ]
\textcolor{blue}{gap> IsNilpotent(GA);}
false
\textcolor{blue}{gap> NilpotencyClassOfGwA(GA);}
0
\textcolor{blue}{gap> List(aGA,i -> IsNilpotent(i));}
[ false, false, false, false, false, false, false, false, false, false ]
\end{Verbatim}

Then we add the function \texttt{CenterOfGwA(GA)} to obtain the center of a
group with action $GA$ and add the function \texttt{IsSingularGwA(GA)} in
order to see if the group with action $GA$ is singular or not.

\begin{Verbatim}[frame=single, fontsize=\small, commandchars=\\\{\}]
\textcolor{blue}{gap> CA := CenterOfGwA(GA);}
GroupWithAction [ Group( [ <identity> of ... ] ), * ]
\textcolor{blue}{gap> IsIdeal(GA,CA);}
true
\textcolor{blue}{gap> IsAbelianGwA(GA);}
false
\textcolor{blue}{gap> IsAbelianGwA(CA);}
true
\textcolor{blue}{gap> List(aGA, i -> IsIdeal(i,CenterOfGwA(i)));}
[ true, true, true, true, true, true, true, true, true, true ]
\end{Verbatim}

We use the following steps to obtain the isomorphism classes of a group with
action;

\textbf{Step 1 :} We add the functions \texttt{GwAMorphism(phi)}, \texttt{%
IsGwAMorphism(phi)} to obtain and control group with action homomorphisms.

\textbf{Step 2 :}\ We add the function \texttt{IsIsomorphicGwA(GA,HA)} to
check if two groups with action are isomorphic or not.

\textbf{Step 3 :} \texttt{IsomorphicGwAFamily(aGA,GA)} gives the isomorphism
family of a given group with action.

\begin{Verbatim}[frame=single, fontsize=\small, commandchars=\\\{\}]
\textcolor{blue}{gap> IsIsomorphicGwA(GA,HA);}
false
\textcolor{blue}{gap> IsomorphicGwAFamily(aGA[1],aGA);}
1 => [ 1 ]
\textcolor{blue}{gap> IsomorphicGwAFamily(aGA[2],aGA);}
3 => [ 2, 3, 4 ]
\textcolor{blue}{gap> IsomorphicGwAFamily(aGA[5],aGA);}
2 => [ 5, 10 ]
\textcolor{blue}{gap> IsomorphicGwAFamily(aGA[6],aGA);}
3 => [ 6, 8, 9 ]
\textcolor{blue}{gap> IsomorphicGwAFamily(aGA[7],aGA);}
1 => [ 7 ]
\end{Verbatim}

We add the function \texttt{IsGwAC1(GA)} which controls if the group with
action $GA$ satisfies Condition 1 or not.

\begin{Verbatim}[frame=single, fontsize=\small, commandchars=\\\{\}]
\textcolor{blue}{gap> IsGwAC1(aGA[4]);}
true
\textcolor{blue}{gap> List(aGA, i -> IsGwAC1(i));}
[ true, true, true, true, false, false, true, false, false, false ]
\end{Verbatim}

We add the function \texttt{ActionTableOfGwA(GA)}, for constructing an
action table of the group with action $GA$,

\begin{Verbatim}[frame=single, fontsize=\small, commandchars=\\\{\}]
\textcolor{blue}{gap> MultiplicationTableOfGwA(aGA[4]);}
[ [ <identity> of ..., f1, f2, f1*f2, f2^2, f1*f2^2 ],
  [ <identity> of ..., f1, f2^2, f1*f2^2, f2, f1*f2 ],
  [ <identity> of ..., f1, f2, f1*f2, f2^2, f1*f2^2 ],
  [ <identity> of ..., f1, f2^2, f1*f2^2, f2, f1*f2 ],
  [ <identity> of ..., f1, f2, f1*f2, f2^2, f1*f2^2 ],
  [ <identity> of ..., f1, f2^2, f1*f2^2, f2, f1*f2 ] ]
\end{Verbatim}

In this session, we select the fourth group with action obtained from $%
S_{3}=\left\langle a,b\right\rangle =\left\{ e,a,b,ab,b^{2},ab^{2}\right\} $
whose action table is as follows:

\begin{equation*}
\begin{tabular}{c||cccccc}
$\varepsilon $ & $e$ & $a$ & $b$ & $ab$ & $b^{2}$ & $ab^{2}$ \\ \hline\hline
$e$ & $e$ & $a$ & $b$ & $ab$ & $b^{2}$ & $ab^{2}$ \\
$a$ & $e$ & $a$ & $b^{2}$ & $ab^{2}$ & $b$ & $ab$ \\
$b$ & $e$ & $a$ & $b$ & $ab$ & $b^{2}$ & $ab^{2}$ \\
$ab$ & $e$ & $a$ & $b^{2}$ & $ab^{2}$ & $b$ & $ab$ \\
$b^{2}$ & $e$ & $a$ & $b$ & $ab$ & $b^{2}$ & $ab^{2}$ \\
$ab^{2}$ & $e$ & $a$ & $b^{2}$ & $ab^{2}$ & $b$ & $ab$%
\end{tabular}%
\end{equation*}

Using above implementations, we have following table which gives some
algebraic properties of groups with action obtained from groups with order $%
<32$.

In the fifth row of the table the groups with action obtained from the Klein
four group, $C_{2}\times C_{2}$ are investigated. There are 10 groups with
action and 3 isomorphism families. Two of them satisfy Condition 1 and one
of them does not$.$One of them has $5$ ideals and two of them have $3$
ideals. There is one group with action with nilpotency class $2,$ one with
nilpotency class $1$ and one is not nilpotent.


\setlength{\tabcolsep}{2pt}
\begin{longtable}{|c|c|c|c|c|c|c|}
\hline\hline
GAP id & Name & |Gr$^{\bullet }$|  &
|Gr$^{\bullet }$/$\sim $| & |GrC1$^{\bullet }$/$\sim $| & |Ideals(Gr$^{\bullet }$/$\sim )$| & |Ni(Gr$^{\bullet }$/$\sim )$| \\ \hline\hline
\endhead
{\small [\ 1, 1 ]} & {\small I} & {\small 1} & {\small 1} & {\small 1} &
\multicolumn{1}{|l|}{%
\begin{tabular}{c}
{\small [\ 1, 1 ]}%
\end{tabular}%
} & \multicolumn{1}{|l|}{%
\begin{tabular}{c}
{\small [\ 0, 1 ]}%
\end{tabular}%
} \\ \hline
{\small [\ 2, 1 ]} & {\small C2} & {\small 1} & {\small 1} & {\small 1} &
\multicolumn{1}{|l|}{%
\begin{tabular}{c}
{\small [\ 2, 1 ]}%
\end{tabular}%
} & \multicolumn{1}{|l|}{%
\begin{tabular}{c}
{\small [\ 1, 1 ]}%
\end{tabular}%
} \\ \hline
{\small [\ 3, 1 ]} & {\small C3} & {\small 1} & {\small 1} & {\small 1} &
\multicolumn{1}{|l|}{%
\begin{tabular}{c}
{\small [\ 2, 1 ]}%
\end{tabular}%
} & \multicolumn{1}{|l|}{%
\begin{tabular}{c}
{\small [\ 1, 1 ]}%
\end{tabular}%
} \\ \hline
{\small [\ 4, 1 ]} & {\small C4} & {\small 2} & {\small 2} & {\small 2} &
\multicolumn{1}{|l|}{%
\begin{tabular}{c}
{\small [\ 3, 2 ]}%
\end{tabular}%
} & \multicolumn{1}{|l|}{%
\begin{tabular}{c}
{\small [\ 1, 1 ], [\ 2, 1 ]}%
\end{tabular}%
} \\ \hline
{\small [\ 4, 2 ]} & {\small C2xC2} & {\small 10} & {\small 3} & {\small 2} &
\multicolumn{1}{|l|}{%
\begin{tabular}{c}
{\small [\ 3, 2 ], [\ 5, 1 ]}%
\end{tabular}%
} & \multicolumn{1}{|l|}{%
\begin{tabular}{c}
{\small [\ 0, 1 ], [\ 1, 1 ], [\ 2, 1 ]}%
\end{tabular}%
} \\ \hline
{\small [\ 5, 1 ]} & {\small C5} & {\small 1} & {\small 1} & {\small 1} &
\multicolumn{1}{|l|}{%
\begin{tabular}{c}
{\small [\ 2, 1 ]}%
\end{tabular}%
} & \multicolumn{1}{|l|}{%
\begin{tabular}{c}
{\small [\ 1, 1 ]}%
\end{tabular}%
} \\ \hline
{\small [\ 6, 1 ]} & {\small S3} & {\small 10} & {\small 5} & {\small 3} &
\multicolumn{1}{|l|}{%
\begin{tabular}{c}
{\small [ 3, 5 ]}%
\end{tabular}%
} & \multicolumn{1}{|l|}{%
\begin{tabular}{c}
{\small [ 0, 5 ]}%
\end{tabular}%
} \\ \hline
{\small [\ 6, 2 ]} & {\small C6} & {\small 2} & {\small 2} & {\small 2} &
\multicolumn{1}{|l|}{%
\begin{tabular}{c}
{\small [ 3, 1 ], [ 4, 1 ]}%
\end{tabular}%
} & \multicolumn{1}{|l|}{%
\begin{tabular}{c}
{\small [ 0, 1 ], [ 1, 1 ]}%
\end{tabular}%
} \\ \hline
{\small [\ 7, 1 ]} & {\small C7} & {\small 1} & {\small 1} & {\small 1} &
\multicolumn{1}{|l|}{%
\begin{tabular}{c}
{\small [\ 2, 1 ]}%
\end{tabular}%
} & \multicolumn{1}{|l|}{%
\begin{tabular}{c}
{\small [\ 1, 1 ]}%
\end{tabular}%
} \\ \hline
{\small [\ 8, 1 ]} & {\small C8} & {\small 4} & {\small 4} & {\small 4} &
\multicolumn{1}{|l|}{%
\begin{tabular}{c}
{\small [ 4, 4 ]}%
\end{tabular}%
} & \multicolumn{1}{|l|}{%
\begin{tabular}{c}
{\small [ 0, 2 ], [ 1, 1 ], [ 2, 1 ]}%
\end{tabular}%
} \\ \hline
{\small [\ 8, 2 ]} & {\small C4xC2} & {\small 32} & {\small 15} & {\small 11} &
\multicolumn{1}{|l|}{%
\begin{tabular}{l}
{\small [ 4, 2 ], [ 5, 2 ], [ 6, 9 ], } \\
{\small [ 8, 2 ]}%
\end{tabular}%
} & \multicolumn{1}{|l|}{%
\begin{tabular}{c}
{\small [ 0, 4 ], [ 1, 1 ], [ 2, 10 ]}%
\end{tabular}%
} \\ \hline
{\small [\ 8, 3 ]} & {\small D8} & {\small 36} & {\small 16} & {\small 11} &
\multicolumn{1}{|l|}{%
\begin{tabular}{c}
{\small [ 4, 6 ], [ 6, 10 ]}%
\end{tabular}%
} & \multicolumn{1}{|l|}{%
\begin{tabular}{c}
{\small [ 0, 6 ], [ 2, 10 ]}%
\end{tabular}%
} \\ \hline
{\small [\ 8, 4 ]} & {\small Q8} & {\small 52} & {\small 10} & {\small 7} &
\multicolumn{1}{|l|}{%
\begin{tabular}{c}
{\small [ 4, 4 ], [ 6, 6 ]}%
\end{tabular}%
} & \multicolumn{1}{|l|}{%
\begin{tabular}{c}
{\small [ 0, 4 ], [ 2, 6 ]}%
\end{tabular}%
} \\ \hline
{\small [\ 8, 5 ]} & {\small C2xC2xC2} & {\small 736} & {\small 14} & {\small 6} &
\multicolumn{1}{|l|}{%
\begin{tabular}{c}
{\small [ 3, 1 ], [ 4, 2 ], [ 5, 1 ], } \\
\multicolumn{1}{l}{\small [ 6, 7 ], [ 7, 1 ], [ 8, 1 ],} \\
\multicolumn{1}{l}{\small [ 16, 1 ]}%
\end{tabular}%
} & \multicolumn{1}{|l|}{%
\begin{tabular}{c}
{\small [ 0, 8 ], [ 1, 1 ], [ 2, 5 ]}%
\end{tabular}%
} \\ \hline
{\small [\ 9, 1 ]} & {\small C9} & {\small 3} & {\small 2} & {\small 2} &
\multicolumn{1}{|l|}{%
\begin{tabular}{c}
{\small [ 3, 2 ]}%
\end{tabular}%
} & \multicolumn{1}{|l|}{%
\begin{tabular}{c}
{\small [ 1, 1 ], [ 2, 1 ]}%
\end{tabular}%
} \\ \hline
{\small [\ 9, 2 ]} & {\small C3xC3} & {\small 33} & {\small 3} & {\small 2} &
\multicolumn{1}{|l|}{%
\begin{tabular}{c}
{\small [ 3, 2 ], [ 6, 1 ]}%
\end{tabular}%
} & \multicolumn{1}{|l|}{%
\begin{tabular}{c}
{\small [ 0, 1 ], [ 1, 1 ], [ 2, 1 ]}%
\end{tabular}%
} \\ \hline
{\small [\ 10, 1 ]} & {\small D10} & {\small 26} & {\small 7} & {\small 3} &
\multicolumn{1}{|l|}{%
\begin{tabular}{c}
{\small [ 3, 7 ]}%
\end{tabular}%
} & \multicolumn{1}{|l|}{%
\begin{tabular}{c}
{\small [ 0, 7 ]}%
\end{tabular}%
} \\ \hline
{\small [\ 10, 2 ]} & {\small C10} & {\small 2} & {\small 2} & {\small 2} &
\multicolumn{1}{|l|}{%
\begin{tabular}{c}
{\small [ 3, 1 ], [ 4, 1 ]}%
\end{tabular}%
} & \multicolumn{1}{|l|}{%
\begin{tabular}{c}
{\small [ 0, 1 ], [ 1, 1 ]}%
\end{tabular}%
} \\ \hline
{\small [\ 11, 1 ]} & {\small C11} & {\small 1} & {\small 1} & {\small 1} &
\multicolumn{1}{|l|}{%
\begin{tabular}{c}
{\small [\ 2, 1 ]}%
\end{tabular}%
} & \multicolumn{1}{|l|}{%
\begin{tabular}{c}
{\small [\ 1, 1 ]}%
\end{tabular}%
} \\ \hline
{\small [\ 12, 1 ]} & {\small C3:C4} & {\small 20} & {\small 10} & {\small 6} &
\multicolumn{1}{|l|}{%
\begin{tabular}{c}
{\small [ 5, 10 ]}%
\end{tabular}%
} & \multicolumn{1}{|l|}{%
\begin{tabular}{c}
{\small [ 0, 10 ]}%
\end{tabular}%
} \\ \hline
{\small [\ 12, 2 ]} & {\small C12} & {\small 4} & {\small 4} & {\small 4} &
\multicolumn{1}{|l|}{%
\begin{tabular}{c}
{\small [ 5, 2 ], [ 6, 2 ]}%
\end{tabular}%
} & \multicolumn{1}{|l|}{%
\begin{tabular}{c}
{\small [ 0, 2 ], [ 1, 1 ], [ 2, 1 ]}%
\end{tabular}%
} \\ \hline
{\small [\ 12, 3 ]} & {\small A4} & {\small 33} & {\small 8} & {\small 4} &
\multicolumn{1}{|l|}{%
\begin{tabular}{c}
{\small [ 3, 8 ]}%
\end{tabular}%
} & \multicolumn{1}{|l|}{%
\begin{tabular}{c}
{\small [ 0, 8 ]}%
\end{tabular}%
} \\ \hline
{\small [\ 12, 4 ]} & {\small D12} & {\small 64} & {\small 19} & {\small 7} &
\multicolumn{1}{|l|}{%
\begin{tabular}{l}
{\small [ 4, 3 ], [ 5, 10 ], [ 6, 1 ], } \\
{\small [ 7, 5 ]}%
\end{tabular}%
} & \multicolumn{1}{|l|}{%
\begin{tabular}{c}
{\small [ 0, 19 ]}%
\end{tabular}%
} \\ \hline
{\small [\ 12, 5 ]} & {\small C6xC2} & {\small 48} & {\small 11} & {\small 5} &
\multicolumn{1}{|l|}{%
\begin{tabular}{c}
{\small [ 2, 1 ], [ 3, 1 ], [ 4, 3 ],} \\
\multicolumn{1}{l}{\small [ 5, 2 ], [ 6, 2 ],} \\
\multicolumn{1}{l}{\small [ 7, 1 ], [ 10, 1 ]}%
\end{tabular}%
} & \multicolumn{1}{|l|}{%
\begin{tabular}{c}
{\small [ 0, 9 ], [ 1, 1 ], [ 2, 1 ]}%
\end{tabular}%
} \\ \hline
{\small [\ 13, 1 ]} & {\small C13} & {\small 1} & {\small 1} & {\small 1} &
\multicolumn{1}{|l|}{%
\begin{tabular}{c}
{\small [\ 2, 1 ]}%
\end{tabular}%
} & \multicolumn{1}{|l|}{%
\begin{tabular}{c}
{\small [\ 1, 1 ]}%
\end{tabular}%
} \\ \hline
{\small [\ 14, 1 ]} & {\small D14} & {\small 50} & {\small 9} & {\small 3} &
\multicolumn{1}{|l|}{%
\begin{tabular}{c}
{\small [ 3, 9 ]}%
\end{tabular}%
} & \multicolumn{1}{|l|}{%
\begin{tabular}{c}
{\small [ 0, 9 ]}%
\end{tabular}%
} \\ \hline
{\small [\ 14, 2 ]} & {\small C14} & {\small 2} & {\small 2} & {\small 2} &
\multicolumn{1}{|l|}{%
\begin{tabular}{c}
{\small [ 3, 1 ], [ 4, 1 ]}%
\end{tabular}%
} & \multicolumn{1}{|l|}{%
\begin{tabular}{c}
{\small [ 0, 1 ], [ 1, 1 ]}%
\end{tabular}%
} \\ \hline
{\small [\ 15, 1 ]} & {\small C15} & {\small 1} & {\small 1} & {\small 1} &
\multicolumn{1}{|l|}{%
\begin{tabular}{c}
{\small [ 4, 1 ]}%
\end{tabular}%
} & \multicolumn{1}{|l|}{%
\begin{tabular}{c}
{\small [ 1, 1 ]}%
\end{tabular}%
} \\ \hline
{\small [\ 16, 1 ]} & {\small C16} & {\small 8} & {\small 6} & {\small 5} &
\multicolumn{1}{|l|}{%
\begin{tabular}{c}
{\small [ 5, 6 ]}%
\end{tabular}%
} & \multicolumn{1}{|l|}{%
\begin{tabular}{c}
{\small [ 0, 3 ], [ 1, 1 ], [ 2, 2 ]}%
\end{tabular}%
} \\ \hline
{\small [\ 16, 2 ]} & {\small C4xC4} & {\small 832} & {\small 73} & {\small 54} &
\multicolumn{1}{|l|}{%
\begin{tabular}{l}
{\small [ 4, 4 ], [ 5, 7 ], [ 6, 6 ], } \\
{\small [ 7, 4 ], [ 9, 33 ], [ 11, 16 ], } \\
{\small [ 13, 2 ], [ 15, 1 ]}%
\end{tabular}%
} & \multicolumn{1}{|l|}{%
\begin{tabular}{c}
{\small [ 0, 20 ], [ 1, 1 ], [ 2, 52 ]}%
\end{tabular}%
} \\ \hline
{\small [\ 16, 3 ]} & {\small (C4xC2):C2} & {\small 640} & {\small 168} & {\small 138} &
\multicolumn{1}{|l|}{%
\begin{tabular}{l}
{\small [ 4, 12 ], [ 5, 9 ], [ 6, 6 ], } \\
{\small [ 7, 5 ], [ 9, 116 ], [ 11, 20 ]}%
\end{tabular}%
} & \multicolumn{1}{|l|}{%
\begin{tabular}{c}
{\small [ 0, 32 ], [ 2, 136 ]}%
\end{tabular}%
} \\ \hline
{\small [\ 16, 4 ]} & {\small C4:C4} & {\small 448} & {\small 161} & {\small 138} &
\multicolumn{1}{|l|}{%
\begin{tabular}{l}
{\small [ 5, 3 ], [ 6, 8 ], [ 7, 8 ], } \\
{\small [ 8, 4 ], [ 9, 118 ], [ 11, 20 ]}%
\end{tabular}%
} & \multicolumn{1}{|l|}{%
\begin{tabular}{c}
{\small [ 0, 25 ], [ 2, 136 ]}%
\end{tabular}%
} \\ \hline
{\small [\ 16, 5 ]} & {\small C8xC2} & {\small 128} & {\small 56} & {\small 40} &
\multicolumn{1}{|l|}{%
\begin{tabular}{l}
{\small [ 5, 4 ], [ 6, 4 ], [ 7, 26 ], } \\
{\small [ 9, 18 ], [ 11, 4 ]}%
\end{tabular}%
} & \multicolumn{1}{|l|}{%
\begin{tabular}{c}
{\small [ 0, 44 ], [ 1, 1 ], [ 2, 11 ]}%
\end{tabular}%
} \\ \hline
{\small [\ 16, 6 ]} & {\small C8:C2} & {\small 128} & {\small 56} & {\small 40} &
\multicolumn{1}{|l|}{%
\begin{tabular}{l}
{\small [ 5, 4 ], [ 6, 8 ], [ 7, 24 ],} \\
{\small [ 9, 20 ]}%
\end{tabular}%
} & \multicolumn{1}{|l|}{%
\begin{tabular}{c}
{\small [ 0, 46 ], [ 2, 10 ]}%
\end{tabular}%
} \\ \hline
{\small [\ 16, 7 ]} & {\small D16} & {\small 256} & {\small 63} & {\small 45} &
\multicolumn{1}{|l|}{%
\begin{tabular}{c}
{\small [ 5, 13 ], [ 7, 50 ]}%
\end{tabular}%
} & \multicolumn{1}{|l|}{%
\begin{tabular}{c}
{\small [ 0, 63 ]}%
\end{tabular}%
} \\ \hline
{\small [\ 16, 8 ]} & {\small QD16} & {\small 144} & {\small 88} & {\small 80} &
\multicolumn{1}{|l|}{%
\begin{tabular}{c}
{\small [ 7, 88 ]}%
\end{tabular}%
} & \multicolumn{1}{|l|}{%
\begin{tabular}{c}
{\small [ 0, 88 ]}%
\end{tabular}%
} \\ \hline
{\small [\ 16, 9 ]} & {\small Q16} & {\small 192} & {\small 57} & {\small 45} &
\multicolumn{1}{|l|}{%
\begin{tabular}{c}
{\small [ 5, 7 ], [ 7, 50 ]}%
\end{tabular}%
} & \multicolumn{1}{|l|}{%
\begin{tabular}{c}
{\small [ 0, 57 ]}%
\end{tabular}%
} \\ \hline
{\small [\ 16, 10 ]} & {\small C4xC2xC2} & {\small 14912} & {\small 404} & {\small 105} &
\multicolumn{1}{|l|}{%
\begin{tabular}{l}
{\small [ 4, 7 ], [ 5, 10 ], [ 6, 6 ],} \\
{\small [ 7, 129 ], [ 8, 92 ], [ 9, 75 ],} \\
{\small [ 10, 11 ], [ 11, 31 ], [ 13, 4 ],} \\
{\small [ 17, 28 ], [ 19, 9 ], [ 27, 2 ]}%
\end{tabular}%
} & \multicolumn{1}{|l|}{%
\begin{tabular}{c}
{\small [ 0, 300 ], [ 1, 1 ], [ 2, 103 ]}%
\end{tabular}%
} \\ \hline
{\small [\ 16, 11 ]} & {\small C2xD8} & {\small 7744} & {\small 578} & {\small 166} &
\multicolumn{1}{|l|}{%
\begin{tabular}{l}
{\small [ 4, 14 ], [ 5, 25 ], [ 6, 16 ],} \\
{\small [ 7, 157 ], [ 8, 149 ], [ 9, 113 ],} \\
{\small [ 10, 2 ], [ 11, 16 ], [ 17, 76 ],} \\
{\small [ 19, 10 ]}%
\end{tabular}%
} & \multicolumn{1}{|l|}{%
\begin{tabular}{c}
{\small [ 0, 426 ], [ 2, 152 ]}%
\end{tabular}%
} \\ \hline
{\small [\ 16, 12 ]} & {\small C2 x Q8} & {\small 9536} & {\small 275} & {\small 80} &
\multicolumn{1}{|l|}{%
\begin{tabular}{l}
{\small [ 4, 4 ], [ 5, 14 ], [ 6, 14 ], } \\
{\small [ 7, 67 ], [ 8, 77 ], [ 9, 51 ], } \\
{\small [ 10, 2 ], [ 11, 8 ], [ 17, 32 ], } \\
{\small [ 19, 6 ]}%
\end{tabular}%
} & \multicolumn{1}{|l|}{%
\begin{tabular}{c}
{\small [ 0, 209 ], [ 2, 66 ]}%
\end{tabular}%
} \\ \hline
{\small [\ 16, 13 ]} & {\small (C4xC2):C2} & {\small 1856} & {\small 232} & {\small 128} &
\multicolumn{1}{|l|}{%
\begin{tabular}{l}
{\small [ 7, 40 ], [ 8, 64 ], [ 9, 24 ], } \\
{\small [ 17, 104 ]}%
\end{tabular}%
} & \multicolumn{1}{|l|}{%
\begin{tabular}{c}
{\small [ 0, 128 ], [ 2, 104 ]}%
\end{tabular}%
} \\ \hline
{\small [\ 17, 1 ]} & {\small C17} & {\small 1} & {\small 1} & {\small 1} &
\multicolumn{1}{|l|}{%
\begin{tabular}{c}
{\small [\ 2, 1 ]}%
\end{tabular}%
} & \multicolumn{1}{|l|}{%
\begin{tabular}{c}
{\small [\ 1, 1 ]}%
\end{tabular}%
} \\ \hline
{\small [\ 18, 1 ]} & {\small D18} & {\small 82} & {\small 14} & {\small 3} &
\multicolumn{1}{|l|}{%
\begin{tabular}{c}
{\small [ 4, 14 ]}%
\end{tabular}%
} & \multicolumn{1}{|l|}{%
\begin{tabular}{c}
{\small [ 0, 14 ]}%
\end{tabular}%
} \\ \hline
{\small [\ 18, 2 ]} & {\small C16} & {\small 6} & {\small 4} & {\small 3} &
\multicolumn{1}{|l|}{%
\begin{tabular}{c}
{\small [ 4, 2 ], [ 6, 2 ]}%
\end{tabular}%
} & \multicolumn{1}{|l|}{%
\begin{tabular}{c}
{\small [ 0, 2 ], [ 1, 1 ], [ 2, 1 ]}%
\end{tabular}%
} \\ \hline
{\small [\ 18, 3 ]} & {\small C3xS3} & {\small 24} & {\small 12} & {\small 7} &
\multicolumn{1}{|l|}{%
\begin{tabular}{c}
{\small [ 4, 1 ], [ 5, 6 ], [ 6, 5 ]}%
\end{tabular}%
} & \multicolumn{1}{|l|}{%
\begin{tabular}{c}
{\small [ 0, 12 ]}%
\end{tabular}%
} \\ \hline
{\small [\ 18, 4 ]} & {\small (C3xC3):C2} & {\small 4510} & {\small 41} & {\small 6} &
\multicolumn{1}{|l|}{%
\begin{tabular}{l}
{\small [ 3, 4 ], [ 4, 23 ], [ 5, 9 ], } \\
{\small [ 7, 5 ]}%
\end{tabular}%
} & \multicolumn{1}{|l|}{%
\begin{tabular}{c}
{\small [ 0, 41 ]}%
\end{tabular}%
} \\ \hline
{\small [\ 18, 5 ]} & {\small C6xC3} & {\small 78} & {\small 7} & {\small 4} &
\multicolumn{1}{|l|}{%
\begin{tabular}{l}
{\small [ 4, 2 ], [ 6, 3 ], [ 7, 1 ], } \\
{\small [ 12, 1 ]}%
\end{tabular}%
} & \multicolumn{1}{|l|}{%
\begin{tabular}{c}
{\small [ 0, 5 ], [ 1, 1 ], [ 2, 1 ]}%
\end{tabular}%
} \\ \hline
{\small [\ 19, 1 ]} & {\small C19} & {\small 1} & {\small 1} & {\small 1} &
\multicolumn{1}{|l|}{%
\begin{tabular}{c}
{\small [\ 2, 1 ]}%
\end{tabular}%
} & \multicolumn{1}{|l|}{%
\begin{tabular}{c}
{\small [\ 1, 1 ]}%
\end{tabular}%
} \\ \hline
{\small [\ 20, 1 ]} & {\small \ Q20} & {\small 72} & {\small 16} & {\small 7} &
\multicolumn{1}{|l|}{%
\begin{tabular}{c}
{\small [ 4, 2 ], [ 5, 14 ]}%
\end{tabular}%
} & \multicolumn{1}{|l|}{%
\begin{tabular}{c}
{\small [ 0, 16 ]}%
\end{tabular}%
} \\ \hline
{\small [\ 20, 2 ]} & {\small \ C20} & {\small 8} & {\small 6} & {\small 5} &
\multicolumn{1}{|l|}{%
\begin{tabular}{c}
{\small [ 4, 2 ], [ 5, 2 ], [ 6, 2 ]}%
\end{tabular}%
} & \multicolumn{1}{|l|}{%
\begin{tabular}{c}
{\small [ 0, 4 ], [ 1, 1 ], [ 2, 1 ]}%
\end{tabular}%
} \\ \hline
{\small [\ 20, 3 ]} & {\small \ C5:C4} & {\small 36} & {\small 9} & {\small 5} &
\multicolumn{1}{|l|}{%
\begin{tabular}{c}
{\small [ 4, 9 ]}%
\end{tabular}%
} & \multicolumn{1}{|l|}{%
\begin{tabular}{c}
{\small [ 0, 9 ]}%
\end{tabular}%
} \\ \hline
{\small [\ 20, 4 ]} & {\small D20} & {\small 144} & {\small 25} & {\small 7} &
\multicolumn{1}{|l|}{%
\begin{tabular}{l}
{\small [ 4, 3 ], [ 5, 14 ], [ 6, 1 ],} \\
{\small [ 7, 7 ]}%
\end{tabular}%
} & \multicolumn{1}{|l|}{%
\begin{tabular}{c}
{\small [ 0, 25 ]}%
\end{tabular}%
} \\ \hline
{\small [\ 20, 5 ]} & {\small C10xC2} & {\small 40} & {\small 9} & {\small 4} &
\multicolumn{1}{|l|}{%
\begin{tabular}{l}
{\small [ 4, 3 ], [ 5, 2 ], [ 6, 2 ], } \\
{\small [ 7, 1 ], [ 10, 1 ]}%
\end{tabular}%
} & \multicolumn{1}{|l|}{%
\begin{tabular}{c}
{\small [ 0, 7 ], [ 1, 1 ], [ 2, 1 ]}%
\end{tabular}%
} \\ \hline
{\small [\ 21, 1 ]} & {\small C7:C3} & {\small 57} & {\small 10} & {\small 4} &
\multicolumn{1}{|l|}{%
\begin{tabular}{c}
{\small [ 3, 10 ]}%
\end{tabular}%
} & \multicolumn{1}{|l|}{%
\begin{tabular}{c}
{\small [ 0, 10 ]}%
\end{tabular}%
} \\ \hline
{\small [\ 21, 2 ]} & {\small C21} & {\small 3} & {\small 2} & {\small 2} &
\multicolumn{1}{|l|}{%
\begin{tabular}{c}
{\small [ 3, 1 ], [ 4, 1 ]}%
\end{tabular}%
} & \multicolumn{1}{|l|}{%
\begin{tabular}{c}
{\small [ 0, 1 ], [ 1, 1 ]}%
\end{tabular}%
} \\ \hline
{\small [ 22, 1 ]} & {\small D22} & {\small 122} & {\small 13} & {\small 3} &
\multicolumn{1}{|l|}{%
\begin{tabular}{c}
{\small [ 3, 13 ]}%
\end{tabular}%
} & \multicolumn{1}{|l|}{%
\begin{tabular}{c}
{\small [ 0, 13 ]}%
\end{tabular}%
} \\ \hline
{\small [\ 22, 2 ]} & {\small C22} & {\small 2} & {\small 2} & {\small 2} &
\multicolumn{1}{|l|}{%
\begin{tabular}{c}
{\small [ 3, 1 ], [ 4, 1 ]}%
\end{tabular}%
} & \multicolumn{1}{|l|}{%
\begin{tabular}{c}
{\small [ 0, 1 ], [ 1, 1 ]}%
\end{tabular}%
} \\ \hline
{\small [\ 23, 1 ]} & {\small C23} & {\small 1} & {\small 1} & {\small 1} &
\multicolumn{1}{|l|}{%
\begin{tabular}{c}
{\small [\ 2, 1 ]}%
\end{tabular}%
} & \multicolumn{1}{|l|}{%
\begin{tabular}{c}
{\small [\ 1, 1 ]}%
\end{tabular}%
} \\ \hline
{\small [\ 24, 1 ]} & {\small C3:C8} & {\small 40} & {\small 20} & {\small 12} &
\multicolumn{1}{|l|}{%
\begin{tabular}{c}
{\small [ 7, 20 ]}%
\end{tabular}%
} & \multicolumn{1}{|l|}{%
\begin{tabular}{c}
{\small [ 0, 20 ]}%
\end{tabular}%
} \\ \hline
{\small [\ 24, 2 ]} & {\small C24} & {\small 8} & {\small 8} & {\small 8} &
\multicolumn{1}{|l|}{%
\begin{tabular}{c}
{\small [ 7, 4 ], [ 8, 4 ]}%
\end{tabular}%
} & \multicolumn{1}{|l|}{%
\begin{tabular}{c}
{\small [ 0, 6 ], [ 1, 1 ], [ 2, 1 ]}%
\end{tabular}%
} \\ \hline
{\small [\ 24, 3 ]} & {\small SL(2,3)} & {\small 33} & {\small 8} & {\small 2} &
\multicolumn{1}{|l|}{%
\begin{tabular}{c}
{\small [ 4, 8 ]}%
\end{tabular}%
} & \multicolumn{1}{|l|}{%
\begin{tabular}{c}
{\small [ 0, 8 ]}%
\end{tabular}%
} \\ \hline
{\small [\ 24, 4 ]} & {\small C3:Q8} & {\small 448} & {\small 92} & {\small 49} &
\multicolumn{1}{|l|}{%
\begin{tabular}{l}
{\small [ 6, 6 ], [ 7, 20 ], [ 8, 16 ], } \\
{\small [ 9, 50 ]}%
\end{tabular}%
} & \multicolumn{1}{|l|}{%
\begin{tabular}{c}
{\small [ 0, 92 ]}%
\end{tabular}%
} \\ \hline
{\small [\ 24, 5 ]} & {\small C4xS3} & {\small 256} & {\small 112} & {\small 80} &
\multicolumn{1}{|l|}{%
\begin{tabular}{l}
{\small [ 8, 28 ], [ 9, 70 ], [ 10, 4 ], } \\
{\small [ 11, 10 ]}%
\end{tabular}%
} & \multicolumn{1}{|l|}{%
\begin{tabular}{c}
{\small [ 0, 112 ]}%
\end{tabular}%
} \\ \hline
{\small [\ 24, 6 ]} & {\small D24} & {\small 576} & {\small 106} & {\small 49} &
\multicolumn{1}{|l|}{%
\begin{tabular}{l}
{\small [ 6, 10 ], [ 7, 30 ], [ 8, 16 ], } \\
{\small [ 9, 50 ]}%
\end{tabular}%
} & \multicolumn{1}{|l|}{%
\begin{tabular}{c}
{\small [ 0, 106 ]}%
\end{tabular}%
} \\ \hline
{\small [\ 24, 7 ]} & {\small \ C2x(C3:C4)} & {\small 512} & {\small 98} & {\small 49} &
\multicolumn{1}{|l|}{%
\begin{tabular}{l}
{\small [ 6, 2 ], [ 7, 14 ], [ 8, 15 ], } \\
{\small [ 9, 50 ], [ 10, 2 ], [ 11, 5 ], } \\
{\small [ 13, 10 ]}%
\end{tabular}%
} & \multicolumn{1}{|l|}{%
\begin{tabular}{c}
{\small [ 0, 98 ]}%
\end{tabular}%
} \\ \hline
{\small [\ 24, 8 ]} & {\small (C6xC2):C2} & {\small 256} & {\small 112} & {\small 80} &
\multicolumn{1}{|l|}{%
\begin{tabular}{c}
{\small [ 8, 32 ], [ 9, 80 ]}%
\end{tabular}%
} & \multicolumn{1}{|l|}{%
\begin{tabular}{c}
{\small [ 0, 112 ]}%
\end{tabular}%
} \\ \hline
{\small [\ 24, 9 ]} & {\small C12xC2} & {\small 128} & {\small 53} & {\small 38} &
\multicolumn{1}{|l|}{%
\begin{tabular}{l}
{\small [ 6, 2 ], [ 7, 6 ], [ 8, 3 ], } \\
{\small [ 9, 24 ], [ 10, 2 ], [ 11, 3 ], } \\
{\small [ 12, 9 ], [ 13, 2 ], [ 16, 2 ]}%
\end{tabular}%
} & \multicolumn{1}{|l|}{%
\begin{tabular}{c}
{\small [ 0, 42 ], [ 1, 1 ], [ 2, 10 ]}%
\end{tabular}%
} \\ \hline
{\small [\ 24, 10 ]} & {\small \ C3xD8} & {\small 144} & {\small 58} & {\small 38} &
\multicolumn{1}{|l|}{%
\begin{tabular}{l}
{\small [ 6, 10 ], [ 7, 6 ], [ 8, 6 ], } \\
{\small [ 9, 26 ], [ 12, 10 ]}%
\end{tabular}%
} & \multicolumn{1}{|l|}{%
\begin{tabular}{c}
{\small [ 0, 48 ], [ 2, 10 ]}%
\end{tabular}%
} \\ \hline
{\small [\ 24, 11 ]} & {\small C3xQ8} & {\small 240} & {\small 32} & {\small 18} &
\multicolumn{1}{|l|}{%
\begin{tabular}{l}
{\small [ 3, 1 ], [ 4, 1 ], [ 6, 6 ], } \\
{\small [ 7, 4 ], [ 8, 4 ], [ 9, 10 ], } \\
{\small [ 12, 6 ]}%
\end{tabular}%
} & \multicolumn{1}{|l|}{%
\begin{tabular}{c}
{\small [ 0, 26 ], [ 2, 6 ]}%
\end{tabular}%
} \\ \hline
{\small [\ 24, 12 ]} & {\small S4} & {\small 58} & {\small 11} & {\small 4} &
\multicolumn{1}{|l|}{%
\begin{tabular}{c}
{\small [ 4, 11 ]}%
\end{tabular}%
} & \multicolumn{1}{|l|}{%
\begin{tabular}{c}
{\small [ 0, 11 ]}%
\end{tabular}%
} \\ \hline
{\small [\ 24, 13 ]} & {\small C2xA4} & {\small 42} & {\small 10} & {\small 6} &
\multicolumn{1}{|l|}{%
\begin{tabular}{c}
{\small [ 4, 1 ], [ 5, 1 ], [ 6, 8 ]}%
\end{tabular}%
} & \multicolumn{1}{|l|}{%
\begin{tabular}{c}
{\small [ 0, 10 ]}%
\end{tabular}%
} \\ \hline
{\small [\ 24, 14 ]} & {\small C2xC2xS3} & {\small 7168} & {\small 196} & {\small 46} &
\multicolumn{1}{|l|}{%
\begin{tabular}{l}
{\small [ 3, 4 ], [ 4, 7 ], [ 5, 8 ], } \\
{\small [ 6, 4 ], [ 7, 39 ], [ 8, 28 ], } \\
{\small [ 9, 71 ], [ 10, 9 ], [ 11, 15 ], } \\
{\small [ 13, 5 ], [ 18, 1 ], [ 21, 5 ]}%
\end{tabular}%
} & \multicolumn{1}{|l|}{%
\begin{tabular}{c}
{\small [ 0, 196 ]}%
\end{tabular}%
} \\ \hline
{\small [\ 24, 15 ]} & {\small C6xC2xC2} & {\small 6636} & {\small 69} & {\small 17} &
\multicolumn{1}{|l|}{%
\begin{tabular}{l}
{\small [ 3, 1 ], [ 4, 4 ], [ 5, 4 ], } \\
{\small [ 6, 6 ], [ 7, 16 ], [ 8, 5 ], } \\
{\small [ 9, 15 ], [ 10, 3 ], [ 11, 3 ], } \\
{\small [ 12, 7 ], [ 13, 1 ], [ 14, 1 ], } \\
{\small [ 16, 1 ], [ 21, 1 ], [ 32, 1 ]}%
\end{tabular}%
} & \multicolumn{1}{|l|}{%
\begin{tabular}{c}
{\small [ 0, 63 ], [ 1, 1 ], [ 2, 5 ]}%
\end{tabular}%
} \\ \hline
{\small [\ 25, 1 ]} & {\small C25} & {\small 5} & {\small 2} & {\small 2} &
\multicolumn{1}{|l|}{%
\begin{tabular}{c}
{\small [ 3, 2 ]}%
\end{tabular}%
} & \multicolumn{1}{|l|}{%
\begin{tabular}{c}
{\small [ 1, 1 ], [ 2, 1 ]}%
\end{tabular}%
} \\ \hline
{\small [\ 25, 2 ]} & {\small C5xC5} & {\small 145} & {\small 3} & {\small 2} &
\multicolumn{1}{|l|}{%
\begin{tabular}{c}
{\small [ 3, 2 ], [ 8, 1 ]}%
\end{tabular}%
} & \multicolumn{1}{|l|}{%
\begin{tabular}{c}
{\small [ 0, 1 ], [ 1, 1 ], [ 2, 1 ]}%
\end{tabular}%
} \\ \hline
{\small [\ 26, 1 ]} & {\small D26} & {\small 170} & {\small 15} & {\small 3} &
\multicolumn{1}{|l|}{%
\begin{tabular}{c}
{\small [ 3, 15 ]}%
\end{tabular}%
} & \multicolumn{1}{|l|}{%
\begin{tabular}{c}
{\small [ 0, 15 ]}%
\end{tabular}%
} \\ \hline
{\small [\ 26, 2 ]} & {\small C26} & {\small 2} & {\small 2} & {\small 2} &
\multicolumn{1}{|l|}{%
\begin{tabular}{c}
{\small [ 3, 1 ], [ 4, 1 ]}%
\end{tabular}%
} & \multicolumn{1}{|l|}{%
\begin{tabular}{c}
{\small [ 0, 1 ], [ 1, 1 ]}%
\end{tabular}%
} \\ \hline
{\small [\ 27, 1 ]} & {\small C27} & {\small 9} & {\small 3} & {\small 2} &
\multicolumn{1}{|l|}{%
\begin{tabular}{c}
{\small [ 4, 3 ]}%
\end{tabular}%
} & \multicolumn{1}{|l|}{%
\begin{tabular}{c}
{\small [ 0, 1 ], [ 1, 1 ], [ 2, 1 ]}%
\end{tabular}%
} \\ \hline
{\small [\ 27, 2 ]} & {\small \ C9xC3} & {\small 297} & {\small 31} & {\small 16} &
\multicolumn{1}{|l|}{%
\begin{tabular}{l}
{\small [ 4, 14 ], [ 5, 3 ], [ 7, 12 ], } \\
{\small [ 10, 2 ]}%
\end{tabular}%
} & \multicolumn{1}{|l|}{%
\begin{tabular}{c}
{\small [ 0, 17 ], [ 1, 1 ], [ 2, 13 ]}%
\end{tabular}%
} \\ \hline
{\small [\ 27, 3 ]} & {\small (C3xC3):C3} & {\small 2673} & {\small 35} & {\small 15} &
\multicolumn{1}{|l|}{%
\begin{tabular}{c}
{\small [ 4, 23 ], [ 7, 12 ]}%
\end{tabular}%
} & \multicolumn{1}{|l|}{%
\begin{tabular}{c}
{\small [ 0, 23 ], [ 2, 12 ]}%
\end{tabular}%
} \\ \hline
{\small [\ 27, 4 ]} & {\small C9:C3} & {\small 297} & {\small 48} & {\small 27} &
\multicolumn{1}{|l|}{%
\begin{tabular}{c}
{\small [ 4, 27 ], [ 7, 21 ]}%
\end{tabular}%
} & \multicolumn{1}{|l|}{%
\begin{tabular}{c}
{\small [ 0, 27 ], [ 2, 21 ]}%
\end{tabular}%
} \\ \hline
{\small [\ 28, 1 ]} & {\small C7:C4} & {\small 100} & {\small 18} & {\small 6} &
\multicolumn{1}{|l|}{%
\begin{tabular}{c}
{\small [ 5, 18 ]}%
\end{tabular}%
} & \multicolumn{1}{|l|}{%
\begin{tabular}{c}
{\small [ 0, 18 ]}%
\end{tabular}%
} \\ \hline
{\small [\ 28, 2 ]} & {\small C28} & {\small 4} & {\small 4} & {\small 4} &
\multicolumn{1}{|l|}{%
\begin{tabular}{c}
{\small [\ 5, 2 ], [ 6, 2 ]}%
\end{tabular}%
} & \multicolumn{1}{|l|}{%
\begin{tabular}{c}
{\small [\ 0, 2 ], [ 1, 1 ], [ 2, 1 ]}%
\end{tabular}%
} \\ \hline
{\small [\ 28, 3 ]} & {\small D28} & {\small 256} & {\small 31} & {\small 7} &
\multicolumn{1}{|l|}{%
\begin{tabular}{l}
{\small [ 4, 3 ], [ 5, 18 ], [ 6, 1 ], } \\
{\small [ 7, 9 ]}%
\end{tabular}%
} & \multicolumn{1}{|l|}{%
\begin{tabular}{c}
{\small [ 0, 31 ]}%
\end{tabular}%
} \\ \hline
{\small [\ 28, 4 ]} & {\small C14xC2} & {\small 40} & {\small 9} & {\small 4} &
\multicolumn{1}{|l|}{%
\begin{tabular}{l}
{\small [ 4, 3 ], [ 5, 2 ], [ 6, 2 ], } \\
{\small [ 7, 1 ], [ 10, 1 ]}%
\end{tabular}%
} & \multicolumn{1}{|l|}{%
\begin{tabular}{c}
{\small [ 0, 7 ], [ 1, 1 ], [ 2, 1 ]}%
\end{tabular}%
} \\ \hline
{\small [\ 29, 1 ]} & {\small C29} & {\small 1} & {\small 1} & {\small 1} &
\multicolumn{1}{|l|}{%
\begin{tabular}{c}
{\small [ 2, 1 ]}%
\end{tabular}%
} & \multicolumn{1}{|l|}{%
\begin{tabular}{c}
{\small [ 1, 1 ]}%
\end{tabular}%
} \\ \hline
{\small [\ 30, 1 ]} & {\small C5xS3} & {\small 20} & {\small 10} & {\small 6} &
\multicolumn{1}{|l|}{%
\begin{tabular}{c}
{\small [ 5, 5 ], [ 6, 5 ]}%
\end{tabular}%
} & \multicolumn{1}{|l|}{%
\begin{tabular}{c}
{\small [ 0, 10 ]}%
\end{tabular}%
} \\ \hline
{\small [\ 30, 2 ]} & {\small C3xD10} & {\small 52} & {\small 14} & {\small 6} &
\multicolumn{1}{|l|}{%
\begin{tabular}{c}
{\small [ 5, 7 ], [ 6, 7 ]}%
\end{tabular}%
} & \multicolumn{1}{|l|}{%
\begin{tabular}{c}
{\small [ 0, 14 ]}%
\end{tabular}%
} \\ \hline
{\small [\ 30, 3 ]} & {\small D30} & {\small 260} & {\small 35} & {\small 9} &
\multicolumn{1}{|l|}{%
\begin{tabular}{c}
{\small [ 5, 35 ]}%
\end{tabular}%
} & \multicolumn{1}{|l|}{%
\begin{tabular}{c}
{\small [ 0, 35 ]}%
\end{tabular}%
} \\ \hline
{\small [\ 30, 4 ]} & {\small C30} & {\small 4} & {\small 4} & {\small 4} &
\multicolumn{1}{|l|}{%
\begin{tabular}{c}
{\small [ 5, 1 ], [ 6, 2 ], [ 8, 1 ]}%
\end{tabular}%
} & \multicolumn{1}{|l|}{%
\begin{tabular}{c}
{\small [ 0, 3 ], [ 1, 1 ]}%
\end{tabular}%
} \\ \hline
{\small [\ 31, 1 ]} & {\small C31} & {\small 1} & {\small 1} & {\small 1} &
\multicolumn{1}{|l|}{%
\begin{tabular}{c}
{\small [ 2, 1 ]}%
\end{tabular}%
} & \multicolumn{1}{|l|}{%
\begin{tabular}{c}
{\small [ 1, 1 ]}%
\end{tabular}%
} \\ \hline
\end{longtable}

\end{document}